\def\+{\oplus}
\newcommand{\dt}{\Delta t}
\newcommand{\R}{{\mathbb R}}
\newcommand{\N}{{\mathbb N}}
\newcommand{\ch}{{\mathcal H}}
\newcommand{\cG}{{\mathcal G}}
\newcommand{\cI}{{\mathcal I}}
\newcommand{\cC}{{\mathcal C}}
\newcommand{\cT}{{\mathcal T}}
\newcommand{\cK}{{\mathcal K}}
\newcommand{\wtM}{{\tilde m}}
\newcommand{\wtU}{{\tilde u}}
\newcommand{\diver}{{\rm{div}}}
\newcommand{\T}{{\mathbb{T}}}
\newcommand{\ds}{\displaystyle}
\def\squareforqed{\hbox{\rlap{$\sqcap$}$\sqcup$}}
\def\qed{\ifmmode\else\unskip\quad\fi\squareforqed}
\def\smartqed{\def\qed{\ifmmode\squareforqed\else{\unskip\nobreak\hfil
\penalty50\hskip1em\null\nobreak\hfil\squareforqed
\parfillskip=0pt\finalhyphendemerits=0\endgraf}\fi}}
\newenvironment{proof}[1][Proof]{\textbf{#1.} }{\ \rule{0.5em}{0.5em}}
\newtheorem{remark}{\textbf{Remark}}
\newtheorem{lemma}{\textbf{Lemma}}
\newtheorem{theorem}{\textbf{Theorem}}
\newtheorem{acknowledgement}{\textbf{Acknowledgement}}
\title{Mean field games: convergence of a finite difference method}
 \author{Yves Achdou \thanks { Univ. Paris Diderot, Sorbonne Paris Cit{\'e}, Laboratoire Jacques-Louis Lions, UMR 7598, UPMC, CNRS, F-75205 Paris, France.
  achdou@ljll-univ-paris-diderot.fr},
 Fabio Camilli  \thanks{Dipartimento di Metodi e Modelli Matematici per le Scienze Applicate, Universit{\`a} Roma "La Sapienza"
 Via Antonio Scarpa  16, I-00161 Roma, camilli@dmmm.uniroma1.it},
 Italo Capuzzo-Dolcetta \thanks{Dipartimento di Matematica, Universit{\`a} Roma "La Sapienza", Piazzale A. Moro 2, I-00185 Roma, capuzzo@mat.uniroma1.it}
 }
\begin{document}

\maketitle
\begin{abstract}
Mean field type models describing the limiting behavior,  as the number of players tends to $+\infty$, of stochastic differential game problems, have been recently  introduced by J-M.
Lasry and P-L. Lions. Numerical methods for the approximation of the stationary and evolutive versions of such models have been proposed by the authors in previous works .
Convergence theorems for these methods are proved under various assumptions.
\end{abstract}



 \begin{description}
 \item [{\bf MSC 2000}:] 65M06,65M12,91-08,91A23,49L25.
 \item [{\bf Keywords}:] Mean field games, numerical methods.
 \end{description}

\section {Introduction}
Mean field type models describing the asymptotic behavior of  stochastic differential games (Nash equilibria) as the number of players
tends to $+\infty$ have recently been introduced by J-M. Lasry and P-L. Lions
 \cite{MR2269875,MR2271747,MR2295621}.  They may lead to systems of  evolutive partial differential equations involving  two unknown scalar functions: the density of the agents in a given state $x$,
namely  $m=m(t,x)$
 and the potential  $u=u(t,x)$.  Since  the present work is devoted to  finite difference schemes, we will assume that the dimension of the state space is $d=2$ (what follows could be generalized to any dimension $d$, although in practice, finite difference methods require too many computing resources when $d\ge 4$).
In the periodic setting, typical such  model comprises  the following system of evolution partial differential equations
  \begin{eqnarray}
  \label{eq:1}
\frac {\partial  u}{\partial t} (t,x) -\nu \Delta  u  (t,x) + H(x,\nabla  u (t,x))  = \Phi[m(t,\cdot)] (x), &\quad \hbox{in }(0,T)\times  \T^2, \\
\label{eq:2}
\frac {\partial  m}{\partial t}  (t,x)+\nu \Delta  m (t,x) +\diver\left(  m (t,\cdot) \frac {\partial H} {\partial p} (\cdot,\nabla  u(t,\cdot) )\right)(x)  = 0,&\quad \hbox{in } (0,T)\times  \T^2,
\end{eqnarray}with the initial and terminal conditions
\begin{equation}
  \label{eq:3} u(0,x)=u_0(x),\quad  m(T,x) = m_T(x),\quad \hbox{in } \T^2,
\end{equation}
given a   cost function $u_0$ and a probability density  $m_T$.
\\
Let us make some comments on the boundary value problem (\ref{eq:1})-(\ref{eq:3}).\\
First, note  that $t$ is the remaining time to the horizon, (the physical time is in fact $T-t$), so $u_0$ should be seen as a final cost or incitation,
 whereas $m_T$ is the density of the agents at the beginning of the game.
\\
Here, we denote by $\T^2=[0,1]^2$ the $2-$dimensional unit torus, by $\nu$ a nonnegative constant
 and by $\Delta$, $\nabla$ and $\diver$, respectively, the Laplace, the gradient
and the divergence operator acting on the $x$ variable.
The system also involves the scalar Hamiltonian $H(x,p)$, which is assumed to be convex with respect to $p$ and  $\cC^1$ regular w.r.t. $x$ and $p$.
 The notation $\frac{\partial H}{\partial p}(x,q)$ is used for the gradient of $p\mapsto H(x,p)$ at $p=q$.
\\
Finally, in  the cost term  $\Phi[m(t,\cdot)] (x)$,  $\Phi$ may be
\begin{itemize}
\item either a local operator, i.e. $\Phi[m(t,\cdot)] (x)= F(m(t,x))$ where $F$ is a  $\cC^1$ regular function defined on $\R_+$. In this case,
there are existence theorems of either classical (see \cite{porretta2012}) or weak solutions (see \cite{MR2271747}), under suitable assumptions on the data, $H$ and $F$.
\item or a non local operator  which continuously maps  the set of probability  measures on $\T^2$ (endowed with the weak * topology) to a bounded subset of $Lip(\T^2)$, the Lipschitz functions on $\T^2$,
 and for example maps continously $C^{k,\alpha}(\T^2)$ to  $C^{k+1,\alpha}(\T^2)$, for all $k\in \N$ and $0\le \alpha <1$. In this case, classical solutions of (\ref{eq:1})-(\ref{eq:3}) are shown to exist
under natural assumptions on the data and some technical assumptions on $H$.
\end{itemize}
We have chosen to focus on the case when the cost $u_{|t=0}$  depends directly on $x$. In some realistic situations,
the final cost may depend on the density of the players, i.e. $u_{|t=0}= \Phi_0[m_{|t=0}](x)$, where
 $\Phi_0$ is an operator acting on probability densities, which may be local or not.
This case can be handled by the methods presented below,  but we will not discuss it in the present work.\\ \\
System (\ref{eq:1})-~(\ref{eq:2}) consists of a forward Bellman equation coupled with a backward Fokker-Planck equation. The forward-backward structure is an important feature of
this system, which makes it necessary to design new strategies for its mathematical analysis (see \cite{MR2271747,MR2295621}) and  for numerical approximation.
\\ \\
The following steady state version of (\ref{eq:1})-(\ref{eq:3}) arises when mean field games with infinite horizon are considered (ergodic problem):
  \begin{eqnarray}
  \label{eq:4}
 -\nu \Delta  u  (x) + H(x,\nabla  u (x))  +\lambda  = \Phi[m(\cdot)] (x), &\quad \hbox{in } \T^2, \\
\label{eq:5}
-\nu \Delta  m (x) -\diver\left(  m\frac {\partial H} {\partial p} (\cdot,\nabla  u)\right) (x)  = 0,&\quad \hbox{in }  \T^2.
\end{eqnarray}
 with the additional normalization of $u$: $\int_{\T^2} u=0$. The unknowns in (\ref{eq:4})-(\ref{eq:5}) are the density $m$, the function $u$ and the scalar $\lambda$.
\\
We refer to the mentioned papers of  J-M. Lasry and P-L. Lions  for analytical results concerning problems (\ref{eq:1})-(\ref{eq:3})  and (\ref{eq:4})-(\ref{eq:5}) as well as for their interpretation in stochastic game theory. Let us only mention here that a very important feature of the mean field model above is that uniqueness and stability may be obtained under reasonable assumptions, see \cite{MR2269875,MR2271747,MR2295621}, in contrast with the Nash system describing the individual behavior of each player, for which uniqueness hardly occurs.
To be more precise, uniqueness for (\ref{eq:1})-(\ref{eq:3}) is true   if $\Phi$  is monotonous in the sense that for all probability measures $m$ and $\tilde m$ on $\T^2$,
\begin{equation}\label{eq:6}
  \int (\Phi[m](x)-\Phi[\tilde m] (x)) (dm(x)-d\tilde m(x)) \le 0\Rightarrow m=\tilde m.
\end{equation}
Examples of MFG models with applications in economics and social sciences are proposed in \cite{MR2762362}.
Many important aspects of the mathematical theory developed by  J-M. Lasry and P-L. Lions on MFG are not published in journals or books, but can be found in the videos of  the lectures of P-L. Lions at Coll{\`e}ge de France:
see the web site of Coll{\`e}ge de France, \cite{PLL}.\\
An important research activity is currently going on about approximation procedures of different types of mean field games models, see
 \cite{MR2647032} for a numerical method based  on the reformulation of the model as an optimal control problem for the Fokker-Planck equation
with an application in economics and \cite{MR2601334} for a  work on discrete time, finite state space mean  field games.
We also refer to \cite{gueant2011,gueant2012} for a specific constructive approach for quadratic Hamiltonians. Finally,
 a semi-discrete approximation for a first order mean field games problem has been studied in \cite{cam_silva}.\\
\\ In \cite{MR2679575} and \cite{ACCDplanning}, the authors have proposed and studied
  finite difference methods  basically relying  on monotone approximations of
 the Hamiltonian and on a suitable weak formulation of the Fokker-Planck equation, both for infinite  and finite horizon mean field games.
These schemes were shown to have  several important features:
\begin{itemize}
\item  existence and uniqueness for the discretized problems can be obtained by similar arguments as those used in the continuous case,
\item  they are robust when $\nu\to 0$ (the deterministic limit of the models),
\item  bounds on the solutions (especially on the Lipschitz norm of $u(t,\cdot)$), which are uniform in the grid step, may be proved under reasonable assumptions on the data.
\end{itemize}
Fast algorithms for solving the discrete nonlinear systems arising in the discrete version of the MFG systems have been proposed in \cite{achdou_perez_2012}.
In the present paper, we would like to discuss the convergence of the schemes proposed in \cite{MR2679575} in the reference case when $H(x,p)$ is of the form
\begin{equation}
  \label{eq:7}
H(x,p)= \ch(x) + |p|^\beta,
\end{equation}
 where $\beta$ is a positive number greater than $1$ and $\ch$ is a  periodic continuous function, under suitable monotony assumptions on the operator $\Phi$.
These assumptions lead to uniqueness for both the continuous and discrete system, and also to a priori and stability estimates under further assumptions.
\\
This work is organized as follows: in Section \ref{sec:finite-diff-schem}, we recall the finite difference schemes proposed in \cite{MR2679575,ACCDplanning}.
Section \ref{sec:basic-facts-numer-1} is devoted to basic facts concerning the discrete Hamiltonians and to the  fundamental identity which leads to uniqueness and stability.
When $\Phi$ is a local operator, a consequence of this key identity is the a priori estimates on the solutions of the discrete MFG system that is presented in Section \ref{sec:stab-estim-refeq:16}.
Convergence theorems in the case when $\Phi$ is a nonlocal smoothing operator are discussed in Section \ref{sec:study-conv-case}. Finally, Section~\ref{sec:study-conv-case-1} contains convergence theorems
in the case when $\Phi$ is a local operator.

 \section{Finite difference schemes}
\label{sec:finite-diff-schem}
 Let $N_T$ be a positive integer and $\dt=T/{N_T}$, $t_n= n \dt$, $n=0,\dots, N_T$.
 Let $\T^2_h$ be a uniform grid on the torus with mesh step $h$, (assuming that $1/{h}$ is an integer $N_h$),
 and $x_{ij}$ denote a generic point in  $\T^2_h$.
The values of $u$ and $m$  at $(x_{i,j},t_n)$ are respectively approximated by $u^n_{i,j}$ and $m^n_{i,j}$.
Let $u^n$ (resp. $m^n$) be the vector containing the values $u^n_{i,j}$  (resp. $m^n_{i,j}$), for  $0\le i,j< N_h$ indexed in the lexicographic order.
For all grid function $z$ on $\T_{h}^2$, all  $i$ and $j$, we agree that $z_{i,j}= z_{(i \hbox{ mod } N_h), (j \hbox{ mod }N_h)}$.
\paragraph{Elementary finite difference operators}
Let us introduce the elementary  finite difference operators
  \begin{equation}
\label{eq:8}
 (D_1^+ u )_{i,j} = \frac{ u_{i+1,j}-u_{i,j}   } {h} \quad \hbox{and }\quad  (D_2^+ u )_{i,j} = \frac{ u_{i,j+1}-u_{i,j}   } {h},
\end{equation}
and define $[D_h u]_{i,j}$ as the collection of the four possible one sided finite differences at $x_{i,j}$:
\begin{equation}
  \label{eq:9}
 [D_h u]_{i,j} =\Bigl((D_1^+ u )_{i,j} , (D_1^+ u )_{i-1,j}, (D_2^+ u )_{i,j}, (D_2^+ u )_{i,j-1}\Bigr) \in \R^4.
\end{equation}
We will also need the standard five point discrete Laplace operator
\begin{displaymath}
(\Delta_h u)_{i,j}=  -\frac 1 {h^2} (4u_{i,j} -u_{i+1,j}-u_{i-1,j}-u_{i,j+1}-u_{i,j-1}).
\end{displaymath}
\paragraph{Numerical Hamiltonian}
In order to approximate the term $H(x, \nabla u)$ in (\ref{eq:1}) or (\ref{eq:4}), we consider a  numerical Hamiltonian $g: \T^2 \times \R^4\to \R$,  $(x,q_1,q_2,q_3,q_4)\mapsto g\left(x,q_1,q_2,q_3,q_4\right)$.
Hereafter we will often assume that the following conditions hold:
\begin{description}
\item ($\mathbf{g_1}$)  \emph{monotonicity}: $g$ is nonincreasing with respect to $q_1$ and $q_3$  and nondecreasing with respect to $q_2$ and $q_4$.
\item ($\mathbf{g_2}$)  \emph{consistency:}
  $g\left(x,q_1,q_1,q_2,q_2\right)=H(x,q), \quad \forall x\in \T^2, \forall q=(q_1,q_2)\in \R^2. $
\item ($\mathbf{g_3}$) \emph{differentiability}: $g$ is  of class $\cC^1$.
\item ($\mathbf{g_4}$)  \emph{convexity} : $(q_1,q_2,q_3,q_4)\mapsto g\left(x,q_1,q_2,q_3,q_4\right)$ is convex.
\end{description}
We will approximate $H(\cdot, \nabla u) (x_{i,j})$ by $g(x_{i,j}, [D_h u]_{i,j} )$.\\
 Standard examples of numerical Hamiltonians fulfilling these requirements are provided by  Lax-Friedrichs or Godunov type schemes, see \cite{MR2679575}.
In this work, we  focus on Hamiltonians of the form  $H(x,p)= \ch(x) + |p|^\beta$, for which we choose
\begin{equation}\label{eq:10}
  g(x,q)= \ch(x)+ G(q_1^-,q_2^+, q_3^-, q_4^+),
\end{equation}
where, for a real number $r$, $r^+=\max(r,0)$ and $r^-=\max(-r,0)$ and where
$G:(\R_+)^4\to \R_+$ is given by
\begin{equation}\label{eq:11}
  G(p)= |p|^{\beta}=(p_1^2+p_2^2+p_3^2+p_4^2)^{\frac \beta 2}.
\end{equation}
\paragraph{Discrete version of the cost term  $\Phi[m(t,\cdot)] (x)$}
We introduce the compact and convex set
\begin{equation}
\label{eq:12}
    \cK_h=\{ (m_{i,j})_{ 0\le i,j <N_h}:  h^2 \sum_{i,j} m_{i,j}=1;\quad   m_{i,j}\ge 0 \}
\end{equation}
which can be viewed as the set of the discrete probability measures.
\\
We make the following assumptions, $\Phi_h$ being local or not:
\begin{description}
\item{($\Phi_{h1}$)}
We assume that $\Phi_h$ is continuous on $\cK_h$.
\item{($\Phi_{h2}$)} The numerical cost $\Phi_h$ is monotone in the following sense:
  \begin{equation}
    \label{eq:13}
    \left( \Phi_h[m]-\Phi_h[\tilde m],m-\tilde m \right)_2 \le 0 \Rightarrow \Phi_h[m]= \Phi_h[\tilde m],
  \end{equation}
where $(u,v)_2=\sum_{0\le i,j<N_h} u_{i,j} v_{i,j}$.
This assumption and  ($g_4$) will be  a sufficient condition  for the discrete MFG system to have at most a solution,  $ \Phi_h$ being local or not.
\end{description}
 
If $\Phi$ is a local operator, i.e.  $\Phi[m] (x) = F( m(x))$, $F$ being a continuous function from $\R^+$ to $\R$,
 then the discrete version of $\Phi$ is naturally given by
 $(\Phi_h[m])_{i,j} = F( m_{i,j})$. In this case, the operator $\Phi_h$ is continuous on the set of nonnegative grid functions.
\\
If $\Phi$ is a nonlocal operator, then we  assume that the  discrete operator $\Phi_h$ has the following additional properties:
\begin{description}
\item{($\Phi_{h3}$)}
 We assume that there exists a constant $C$ independent of $h$ such that for all times $t$
and for all grid function  $m\in \cK_h$,
\begin{equation}
  \label{eq:14}
\| \Phi_h[m] \|_{\infty} \le C
\end{equation}
and
  \begin{equation}\label{eq:15}
    |        (\Phi_h[m])_{i,j} - (\Phi_h[m])_{k,\ell} | \le C d_{\T}(x_{i,j},x_{k,\ell})
  \end{equation}
where $d_{\T}(x,y)$ is the distance between the two points $x$ and $y$ in the torus $\T^2$.
\item {($\Phi_{h4}$)}  Define $\cK$ as the set of probability densities, i.e. nonnegative  integrable functions $m$ on $\T^2$ such that $\int_{\T^2} m(x) dx=1$.
For a grid function  $m_h\in \cK_h$, let $\tilde m_h$ be the piecewise bilinear interpolation of $m_h$ at the grid nodes: it is clear that $\tilde m_h\in \cK$.
We assume that there exists a continuous and bounded function
$\omega: \R_+ \to \R_+$ such that $ \omega(0)=0$ and
for all $m \in \cK$, for all sequences $(m_{h})_h$, $m_h\in \cK_h$,
\begin{equation}
  \label{eq:16}
\left \|\, \Phi[m] -\Phi_h[m_h]\, \right \|_{L^\infty (\T_h^2)} \le \omega\left(   \| m-\tilde m_h\|_{L^1(\T^2)} \right).
\end{equation}
 Let $\cI_h m$ be the  grid function whose value at $x_{i,j}$ is
  \begin{displaymath}
     \int_{ | x-x_{i,j}|_{\infty} \le h/2 } m(x) dx.
  \end{displaymath}
It is clear that if $m\in \cK$ then $\cI_h m\in \cK_h$ and that (\ref{eq:16}) implies that
 \begin{equation}
\label{eq:17}
\lim_{h\to 0} \sup_{m\in \cK}\left \|\, \Phi[m] -\Phi_h[\cI_h m]\, \right \|_{L^\infty (\T_h^2)} =0.
\end{equation}
\end{description}
For example, if $\Phi[m]$ is defined as the solution $w$ of the equation $\Delta^2 w + w = m$ in $\T^2$,
($\Delta^2$ being the bilaplacian), then one can define $\Phi_h[m_h]$ as the solution $w_h$ of
$\Delta_h^2 w_h + w_h = m_h$ in $\T_h^2$. It is possible to check that all the above properties are true.
\paragraph{Discrete Bellman equation}
The discrete version of the Bellman equation is obtained by applying a semi-implicit Euler scheme to (\ref{eq:1}),
\begin{equation}\label{eq:18}
\ds   \frac {u^{n+1}_{i,j}- u^{n}_{i,j}} {\dt}  -\nu (\Delta_h u^{n+1})_{i,j} + g(x_{i,j}, [D_h u^{n+1}]_{i,j} )= (\Phi_h[m^{n}])_{i,j},
\end{equation}
for all points in $\T_h^2$ and all $n$, $0\le n < N_T$, where all the discrete operators have been introduced above. Given $(m^n)_{n=0,\dots, N_T-1}$,
(\ref{eq:18}) and the initial condition $u_{i,j}^0= u_0(x_{i,j})$ for all $(i,j)$ completely characterizes  $(u^n)_{0\le n\le N_T}$.
\paragraph{Discrete Fokker-Planck equation}
In order to approximate equation \eqref{eq:2}, it is convenient to  consider its weak formulation which involves in particular the term
\[\ds \int_{\T^2} \diver\left(m \frac {\partial H}{\partial p}(\cdot,\nabla u) \right) (x) w(x)\, dx.\]
By periodicity,
\[\ds - \int_{\T^2} m(x) \frac {\partial H}{\partial p}(x,\nabla u(x)) \cdot \nabla w(x)\, dx\]
holds for any test function $w$. The right hand side in the identity above  will be approximated by
\[ - h^2\sum_{i,j}    m_{i,j} \nabla_q g (x_{i,j},[D_h u]_{i,j}) \cdot [D_h w]_{i,j}
= h^2 \sum_{i,j} \cT_{i,j}(u,m) w_{i,j},\]
where the  transport  operator $\cT$ is defined as follows:
\begin{equation}
\label{eq:19}
  \begin{split}
  &  \cT_{i,j}(u,m)= \\ & \frac 1 h\left(
  \begin{array}[c]{l}
\ds
\left(
  \begin{array}[c]{l}
   \ds  m_{i,j}  \frac {\partial g} {\partial q_1} (x_{i,j}, [D_h u]_{i,j})
     - m_{i-1,j}  \frac {\partial g} {\partial q_1}(x_{i-1,j}, [D_h u]_{i-1,j}) \\ \ds
     +m_{i+1,j}  \frac {\partial g} {\partial q_2} (x_{i+1,j},[D_h u]_{i+1,j})
-   \ds  m_{i,j}  \frac {\partial g} {\partial q_2} (x_{i,j}, [D_h u]_{i,j})
  \end{array}
\right)
\\ \\
\ds +
\left(
  \begin{array}[c]{l}
\ds  m_{i,j}  \frac {\partial g} {\partial q_3} (x_{i,j}, [D_h u]_{i,j})
-  \ds  m_{i,j-1}\frac {\partial g} {\partial q_3} (x_{i,j-1}, [D_h u]_{i,j-1}) \\
+ \ds  m_{i,j+1}\frac {\partial g} {\partial q_4} (x_{i,j+1}, [D_h u]_{i,j+1})
-  m_{i,j}  \frac {\partial g} {\partial q_4} (x_{i,j}, [D_h u]_{i,j})
  \end{array}
\right)
  \end{array}
\right).
  \end{split}
\end{equation}
The discrete version of equation (\ref{eq:2}) is chosen as follows:
\begin{equation}
\label{eq:20}
\ds   \frac {m^{n+1}_{i,j}- m^{n}_{i,j}} {\dt} +\nu (\Delta_h m^{n})_{i,j}
+\cT_{i,j}(u^{n+1},m^n)= 0.
\end{equation}
This scheme is implicit w.r.t. to $m$ and explicit w.r.t. $u$ because the considered Fokker-Planck equation is backward.
Given $u$ this is a system of linear equations for $m$.
It is easy to see that if $m^n$ satisfies (\ref{eq:20}) for $0\le n<N_T$ and if $m^{N_T}\in \cK_h$, then $m^n\in \cK_h$ for all $n$,  $0\le n<N_T$.
\begin{remark}\label{sec:numer-schem-refeq:7-2}
It is important to realize that the
operator $m\mapsto  \bigl(-\nu (\Delta_h m)_{i,j} - \cT_{i,j}(u,m)\bigr)_{i,j}$
 is the adjoint of the linearized version of the operator
 $u\mapsto   \bigl( -\nu (\Delta_h u)_{i,j} + g(x_{i,j}, [D_h u]_{i,j} )\bigr)_{i,j}$.
\end{remark}
\paragraph{Summary}
The fully discrete scheme
 for  system (\ref{eq:1}),(\ref{eq:2}),(\ref{eq:3})
is therefore the following: for all $0\le i,j< N_h$ and $ 0\le n < N_T$
\begin{equation}
\label{eq:21}
\left\{
  \begin{array}[c]{llr}
\frac {u^{n+1}_{i,j}- u^{n}_{i,j}} {\dt}  -\nu (\Delta_h u^{n+1})_{i,j}
 + g(x_{i,j}, [D_h u^{n+1}]_{i,j} )&=(\Phi_h[m^{n}])_{i,j}), \\
\frac {m^{n+1}_{i,j}- m^{n}_{i,j}} {\dt} +\nu (\Delta_h m^{n})_{i,j}
+\cT_{i,j}(u^{n+1},m^n)&=0,
\end{array}\right.
\end{equation}
with the initial and terminal conditions
\begin{equation}
  \label{eq:22}
m_{i,j}^{N_T}=  \frac 1 {h^2} \int_{|x-x_{i,j}|_{\infty}\le h/2} m_T(x) dx,\quad\quad  u_{i,j}^0=u_0(x_{i,j}),\quad 0\le i,j< N_h.\end{equation}
The following theorem was proved in \cite{MR2679575} (using essentially a Brouwer fixed point argument and estimates on the solutions of the discrete Bellman equation):
\begin{theorem}
  \label{sec:exist-discr-probl-1}
Assume  that $\nu>0$, that ($g_1$)--($g_3$) and ($\Phi_{h1}$) hold, that $u_0$ is a continuous function on $\T^2$ and that $m_T\in \cK$: then (\ref{eq:21})--(\ref{eq:22}) has a solution such that $m^n\in \cK_h$,  $\forall n$.
\\
If furthermore
\begin{itemize}
\item  ($\Phi_{h3}$) holds
\item  there exists a constant $C$ such that \vspace{5pt}
\begin{displaymath}
\left|\frac{\partial g}{\partial x} (x, (q_1,q_2,q_3,q_4))\right|\le C(1+|q_1|+|q_2|+|q_3|+|q_4|)\quad  \forall x\in \T^2,\, \forall q_1,q_2,q_3,q_4
\end{displaymath}
\item $u_0$ is Lipschitz continuous
\end{itemize}
then  $\max_{0\le n \le N_T} \bigl(\|u^n\|_{\infty} + \|D_h u^n \|_{\infty} \bigr) \le c$ for a constant $c$ independent of $h$ and $\dt$.
\end{theorem}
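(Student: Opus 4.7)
The plan is to prove existence by a Brouwer fixed point argument on the convex compact set $\cK_h^{N_T}=\cK_h\times\cdots\times\cK_h$, and then to obtain the uniform bounds via discrete maximum principle arguments applied to the Bellman equation.

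For existence, I would set up the map $\mathcal{S}:\cK_h^{N_T}\to\cK_h^{N_T}$ as follows. Given $(m^0,\dots,m^{N_T-1})\in\cK_h^{N_T}$, first solve the Bellman equation (\ref{eq:18}) forward in $n$ starting from $u^0_{i,j}=u_0(x_{i,j})$; at each step this requires inverting the nonlinear operator $u^{n+1}\mapsto u^{n+1}-\dt\,\nu\Delta_h u^{n+1}+\dt\,g(\cdot,[D_h u^{n+1}])$, which by ($g_1$) and ($g_3$) is a continuous, coercive, strictly monotone map on $\R^{N_h^2}$ (the monotonicity of $g$ with respect to $q_1,q_3$ (resp.\ $q_2,q_4$) combined with the discrete Laplacian makes the Jacobian an M-matrix), hence a homeomorphism, and the resulting $u^{n+1}$ depends continuously on $u^n$ and $m^n$. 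Then, with $u$ in hand, solve the backward linear system (\ref{eq:20}) for $(m^0,\dots,m^{N_T-1})$ starting from $m^{N_T}$ given by (\ref{eq:22}). Two facts must be verified to land in $\cK_h^{N_T}$: mass conservation follows by summing (\ref{eq:20}) over $(i,j)$ and noting that by periodicity both $\sum_{i,j}(\Delta_h m^n)_{i,j}$ and $\sum_{i,j}\cT_{i,j}(u^{n+1},m^n)$ vanish; nonnegativity is a discrete maximum principle for the backward transport-diffusion operator, which works precisely because ($g_1$) guarantees $\partial g/\partial q_1,\partial g/\partial q_3\le 0$ and $\partial g/\partial q_2,\partial g/\partial q_4\ge 0$, so $\cT_{i,j}(u^{n+1},\cdot)$ restricted to the coefficient of $m^n_{i,j}$ combines with the diagonal of $\nu\Delta_h$ to give a matrix with nonpositive off-diagonal entries and dominant diagonal. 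Continuity of $\mathcal{S}$ follows from ($\Phi_{h1}$) together with the continuous dependence at each sequential step. Brouwer's theorem then yields a fixed point, i.e., a solution of (\ref{eq:21})--(\ref{eq:22}).

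For the $L^\infty$ bound on $u^n$, I would apply a discrete comparison principle to the Bellman equation. Using ($\Phi_{h3}$) one has $|(\Phi_h[m^n])_{i,j}|\le C$ uniformly in $n$ and $h$. Comparing $u^n$ against the affine-in-$n$ functions $\pm(\|u_0\|_\infty+Cn\dt)+$ (constant) as super/subsolutions, the monotonicity and consistency ($g_1$)-($g_2$) give $\|u^n\|_\infty\le \|u_0\|_\infty+CT$.

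The Lipschitz estimate is the real work and, I expect, the main obstacle. I would introduce the finite differences $v^n=D_1^+u^n$ (and analogously $D_2^+u^n$) and derive a discrete equation for $v^n$ by subtracting the Bellman equation at $(i+1,j)$ and $(i,j)$. The diffusion and transport terms produce a linear operator of the same monotone (M-matrix) type as above, thanks again to ($g_1$) and ($g_3$); the Hamiltonian term, after applying the mean value theorem componentwise in $q$, contributes a linear first-order operator whose coefficients involve $\partial g/\partial q_k$ (no sign issue because monotonicity in $q$ is exactly what is needed); the spatial variation of $g$ produces a source term bounded by $C(1+|[D_h u^{n+1}]|)$ by the assumed growth of $\partial g/\partial x$; and $\Phi_h[m^n]$ contributes a source bounded by $C$ by (\ref{eq:15}). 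Thus $|v^n|$ satisfies a discrete equation of the schematic form $|v^n|_t - \nu\Delta_h|v^n| + (\text{monotone drift}) \le C(1+\|D_h u\|_\infty)$, and a discrete Gronwall/maximum principle, started from $\|D_h u^0\|_\infty \le \mathrm{Lip}(u_0)$, yields the desired uniform bound on $\|D_h u^n\|_\infty$. The delicate point is handling the absolute value $|v^n|$ in a fully discrete setting while preserving monotonicity; the standard device is to test the equation against a maximizing grid point of $|v^n|$ at each time step, so that the diffusion and drift terms have the right sign and only the source remains.
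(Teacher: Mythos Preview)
The paper does not actually prove this theorem: it is quoted from \cite{MR2679575}, with only the parenthetical hint that the argument uses ``essentially a Brouwer fixed point argument and estimates on the solutions of the discrete Bellman equation.'' Your proposal reconstructs exactly this strategy---Brouwer on $\cK_h^{N_T}$ for existence, then discrete comparison for $\|u^n\|_\infty$, then differencing the Bellman equation and using the monotone (M-matrix) structure together with the growth of $\partial g/\partial x$ and the Lipschitz bound~(\ref{eq:15}) on $\Phi_h$ for $\|D_h u^n\|_\infty$---and the details you give (M-matrix invertibility for the implicit HJB step, mass conservation and positivity for the Fokker--Planck step, testing at a maximizing grid point for the gradient bound) are all correct and standard. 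This is the same approach as in the cited reference.
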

\begin{remark}
  Note that the technical condition on $\frac{\partial g}{\partial x}$ is automatically true if $g$ is given by  (\ref{eq:10})-(\ref{eq:11}) and $h$ is $\cC^1$.
\end{remark}
\begin{remark}
  A priori estimates in the case when $\Phi$ is a local operator will be given in \S~\ref{sec:stab-estim-refeq:16}.
\end{remark}
Since (\ref{eq:21})-(\ref{eq:22}) has exactly the same structure as the continuous problem (\ref{eq:1})-(\ref{eq:3}), uniqueness has been obtained in \cite{MR2679575} with the same arguments as in \cite{MR2271747}:
\begin{theorem}
  Assume  that $\nu>0$, that ($g_1$)--($g_4$) and ($\Phi_{h1}$)-($\Phi_{h2}$) hold,
then  (\ref{eq:21})--(\ref{eq:22}) has a unique solution.
\end{theorem}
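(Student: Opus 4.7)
The plan is to transpose the Lasry--Lions uniqueness argument for the continuous problem \cite{MR2271747} to the discrete setting. The ingredients are the convexity assumption ($g_4$), the monotonicity ($\Phi_{h2}$), the self-adjointness of the discrete Laplacian $\Delta_h$ in the inner product $(\cdot,\cdot)_2$, and the forward--backward duality between the discrete Bellman and Fokker--Planck operators recalled in Remark~\ref{sec:numer-schem-refeq:7-2}. Let $(u_1^n,m_1^n)$ and $(u_2^n,m_2^n)$ be two solutions of (\ref{eq:21})--(\ref{eq:22}); set $\bar u^n := u_1^n - u_2^n$ and $\bar m^n := m_1^n - m_2^n$, so that (\ref{eq:22}) gives $\bar u^0 = 0$ and $\bar m^{N_T} = 0$.

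The core energy computation is: subtract the two copies of the Bellman equation in (\ref{eq:21}), pair with $\bar m^n$ in $(\cdot,\cdot)_2$, and sum over $n=0,\dots,N_T-1$; symmetrically, subtract the two Fokker--Planck equations, pair with $\bar u^{n+1}$, sum, and then add the two resulting identities. A discrete Abel summation in $n$ uses $\bar u^0 = 0 = \bar m^{N_T}$ to annihilate the time-increment boundary contributions, and the $\nu\Delta_h$ terms cancel by self-adjointness. What remains is the ``fundamental identity'' of Section~\ref{sec:basic-facts-numer-1}: using the explicit form (\ref{eq:19}) of $\cT$, one discrete summation by parts rewrites $h^2\sum_{i,j}\cT_{i,j}(u,m)\,w_{i,j}$ as $-h^2\sum_{i,j} m_{i,j}\,\nabla_q g(x_{i,j},[D_h u]_{i,j})\cdot [D_h w]_{i,j}$, after which the combination of $g$- and $\cT$-differences collapses, for each $n$, into a sum of two brackets
\[
h^2\sum_{i,j} m_k^n\bigl[g(x_{i,j},[D_h u_{k'}^{n+1}]_{i,j}) - g(x_{i,j},[D_h u_k^{n+1}]_{i,j}) - \nabla_q g(x_{i,j},[D_h u_k^{n+1}]_{i,j})\cdot [D_h \bar u^{n+1}]_{i,j}\bigr] \ge 0, \qquad \{k,k'\}=\{1,2\},
\]
each nonnegative by the convexity ($g_4$) of $g$ in its last four arguments together with $m_k^n \ge 0$. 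Offsetting these nonnegative brackets against the remaining coupling term yields
\[
\dt\sum_{n=0}^{N_T-1}\bigl(\Phi_h[m_1^n]-\Phi_h[m_2^n],\,\bar m^n\bigr)_2 \le 0.
\]

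From there, ($\Phi_{h2}$) applied slice by slice forces $\Phi_h[m_1^n] = \Phi_h[m_2^n]$ for every $n$. Once the discrete costs agree, the two Bellman equations reduce to the same implicit scheme with monotone numerical Hamiltonian (via ($g_1$)) and identical initial datum $u_0$; the standard discrete comparison principle for such schemes, available because $\nu>0$, forces $u_1 = u_2$. Plugged back into (\ref{eq:20}), the Fokker--Planck equation becomes one and the same linear backward system for $m$ with identical terminal data $m_T$, whose unique solvability (diagonal dominance inherited from ($g_1$) and $\nu>0$) gives $m_1 = m_2$. The main obstacle is the algebraic setup of the fundamental identity itself: checking that the discrete summations by parts hidden in (\ref{eq:19}) make the $g$- and $\cT$-contributions telescope exactly into the two convexity brackets above with no leftover commutator terms (periodicity of the torus handles the discrete ``boundary'' contributions). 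Once that identity is in hand, convexity and monotonicity dispatch the rest essentially mechanically.
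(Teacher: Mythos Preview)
Your proposal is correct and follows essentially the same route as the paper: it is precisely the discrete Lasry--Lions argument that the paper alludes to (citing \cite{MR2679575,MR2271747}) and whose algebraic backbone is the fundamental identity (\ref{eq:41}) derived in \S\ref{sec:stab-refeq:19:-basic}. With $a=b=0$ and $\bar u^0=\bar m^{N_T}=0$, that identity reduces to $\cG(m_1,u_1,u_2)+\cG(m_2,u_2,u_1)+\sum_n(\Phi_h[m_1^n]-\Phi_h[m_2^n],\bar m^n)_2=0$; since ($g_4$) makes the two $\cG$ terms nonnegative and ($\Phi_{h2}$) forces each summand $(\Phi_h[m_1^n]-\Phi_h[m_2^n],\bar m^n)_2\ge 0$, all terms vanish, after which your final two steps (comparison for the Bellman scheme, linear uniqueness for Fokker--Planck) conclude exactly as intended.
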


\section{Basic facts for numerical Hamiltonians of the form (\ref{eq:10})-(\ref{eq:11})}
\label{sec:basic-facts-numer-1}
We focus on numerical Hamitonians in the form (\ref{eq:10})-(\ref{eq:11}) but obviously, what follows
 holds for  $g(x,q)= \ch(x) + c G(q)$, where $G$ is given by (\ref{eq:11}) and $c$ is a positive constant.\\
We use the following notations: for $p\in \R^4$, $|p|$ (resp. $|p|_\infty$) is the Euclidean norm of $p$ (resp. the max norm: $|p|_\infty= \max_{i=1,\dots,4} |p_i|$).
For a function  $p \to \psi(p)\in \R$,   $p\in \R^4$,  $\psi_p(p)\in \R^4$ (resp.  $\psi_{pp}(p)\in \R^{4\times 4}$)  will stand for the gradient of $\psi$  (resp.  the Hessian of $\psi$).
For a function $(x,p) \to \psi(x,p)\in \R$,  $x\in \T_2$, $p\in \R^4$,  $\psi_p(x,p)\in \R^4$ is the gradient of $p \to \psi(x,p)$  and  $\psi_{pp}(x,p)\in \R^{4\times 4}$ is the  Hessian of $p\to \psi(x,p)$.
\subsection{Basic lemmas}\label{sec:basic-lemmas}
Let us state a few lemmas about $g$ and $G$:
\begin{lemma}
  \label{sec:case-when-hx}
For all $p\in (\R_+)^4$, $G_p(p)=\beta |p|^{\beta-2} p$ and
  \begin{equation}\label{eq:23}
    G_{pp}(p)= \beta |p|^{\beta-2}I_4 + \beta (\beta-2)  |p|^{\beta-4}  p \otimes p.
  \end{equation}
If $\beta\ge 2$, then   $G_{pp}(p)- \beta |p|^{\beta-2}I_4$ is a positive semi-definite  matrix, which may be written
\begin{equation}
  \label{eq:24}
G_{pp}(p)\ge \beta |p|^{\beta-2}I_4.
\end{equation}
If $1<\beta<2$, then
\begin{equation}
  \label{eq:25}
G_{pp}(p) \ge \beta(\beta -1) |p|^{\beta-2}I_4.
\end{equation}
\end{lemma}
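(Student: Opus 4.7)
The lemma is a purely algebraic statement about the function $G(p) = (p_1^2+p_2^2+p_3^2+p_4^2)^{\beta/2}$, so my plan is to handle it by direct calculus followed by a case analysis of the sign of $\beta(\beta-2)$.

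First, I would compute $G_p$ and $G_{pp}$ by the chain rule. Writing $G(p) = (|p|^2)^{\beta/2}$, differentiating once gives $\partial_{p_i} G = \beta |p|^{\beta-2} p_i$, i.e.\ $G_p(p)=\beta|p|^{\beta-2}p$. Differentiating once more,
\begin{equation*}
\partial_{p_j}\partial_{p_i} G = \beta|p|^{\beta-2}\delta_{ij} + \beta(\beta-2)|p|^{\beta-4} p_ip_j,
\end{equation*}
which in matrix form is exactly (\ref{eq:23}). This needs $p\neq 0$; since $\beta>1$ the right-hand sides extend continuously to $0$, which is the regime relevant for the lemma.

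For the two inequalities I would split on the sign of $\beta-2$. When $\beta\ge 2$ the rank-one correction $\beta(\beta-2)|p|^{\beta-4}p\otimes p$ has a nonnegative prefactor, and since $v^T(p\otimes p)v = (p\cdot v)^2\ge 0$ for every $v\in\R^4$, this correction is positive semi-definite. Dropping it yields the lower bound $G_{pp}(p)\ge \beta|p|^{\beta-2}I_4$, which is (\ref{eq:24}). When $1<\beta<2$, the prefactor is negative, so I would test $G_{pp}(p)$ against an arbitrary unit vector $v\in\R^4$ and use $(p\cdot v)^2 \le |p|^2$ (Cauchy–Schwarz) to bound
\begin{equation*}
v^T G_{pp}(p)v = \beta|p|^{\beta-2} + \beta(\beta-2)|p|^{\beta-4}(p\cdot v)^2 \ge \beta|p|^{\beta-2} + \beta(\beta-2)|p|^{\beta-2} = \beta(\beta-1)|p|^{\beta-2},
\end{equation*}
which is (\ref{eq:25}). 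The bound is sharp, attained at $v=p/|p|$; in the orthogonal directions one recovers the better estimate $\beta|p|^{\beta-2}$, consistent with the $\beta\ge 2$ case.

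Nothing here is really an obstacle—the proof is bookkeeping and a single use of Cauchy–Schwarz. The only point to take care of is the role of the domain $(\R_+)^4$: it does not enter the inequalities themselves (they hold for all $p\in\R^4\setminus\{0\}$), but the fact that in the scheme $G$ is evaluated on $(\R_+)^4$ (after the $(\cdot)^-,(\cdot)^+$ truncations in (\ref{eq:10})) is what will make these identities and bounds directly usable in the subsequent analysis of the discrete MFG system.
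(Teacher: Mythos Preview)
Your argument is correct: the paper states this lemma without proof (it is treated as elementary, and the appendix only supplies proofs for Lemmas~\ref{sec:case-when-hx-1}--\ref{sec:case-when-hx-3}), and your direct computation of $G_p$, $G_{pp}$, followed by the sign analysis of $\beta(\beta-2)$ and the Cauchy--Schwarz bound $(p\cdot v)^2\le |p|^2|v|^2$, is exactly the standard justification. One small slip: your sentence ``since $\beta>1$ the right-hand sides extend continuously to $0$'' is true for $G_p$ but not for $G_{pp}$ when $1<\beta<2$, since $|p|^{\beta-2}\to+\infty$; the inequalities (\ref{eq:24})--(\ref{eq:25}) are therefore only asserted for $p\neq 0$, which is all that is used downstream.
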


\begin{lemma}
  \label{sec:case-when-hx-1}
For all $q, \tilde q\in \R^4$, let $ p,\tilde p \in (\R_+)^4$ be given by
\begin{equation}
  \label{eq:26}
  p=(q_1^-,q_2^+, q_3^-, q_4^+),\quad \hbox{and}  \quad\tilde p=(\tilde q_1^-,\tilde q_2^+, \tilde q_3^-, \tilde q_4^+).
\end{equation}
We have that
\begin{equation}
  \label{eq:27}
g(x,\tilde q)-g(x, q) -g_q(x, q)\cdot (\tilde q -q )\ge G(\tilde p)-G( p) -G_p(p)\cdot (\tilde p -p ).
\end{equation}
\end{lemma}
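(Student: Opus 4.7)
The plan is to rewrite the inequality as a sum of four one-dimensional contributions and then check each by a direct case analysis on the sign of $q_i$.

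The first step is the reduction. Since $g(x,q)=\ch(x)+G(\phi(q))$ with $\phi(q)=(q_1^-,q_2^+,q_3^-,q_4^+)$, the quantity $g(x,\tilde q)-g(x,q)$ equals $G(\tilde p)-G(p)$, so the claimed inequality is equivalent to
\begin{equation*}
G_p(p)\cdot(\tilde p-p)\;\ge\;g_q(x,q)\cdot(\tilde q-q).
\end{equation*}
By the chain rule, $g_{q_i}(x,q)=(G_p(p))_i\,\phi_i'(q_i)$, where $\phi_1=\phi_3$ is the map $s\mapsto s^-$ and $\phi_2=\phi_4$ is $s\mapsto s^+$. Thus the inequality decouples into the sum
\begin{equation*}
\sum_{i=1}^{4}(G_p(p))_i\bigl[(\tilde p_i-p_i)-\phi_i'(q_i)(\tilde q_i-q_i)\bigr]\ge 0,
\end{equation*}
and it suffices to prove nonnegativity of each summand, exploiting that $(G_p(p))_i=\beta|p|^{\beta-2}p_i\ge 0$.

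The second step is the elementary case analysis. For $i=1$ (the case $i=3$ is identical): if $q_1\ge 0$ then $p_1=q_1^-=0$, so $(G_p(p))_1=0$ and the summand vanishes; if $q_1<0$ then $\phi_1'(q_1)=-1$ and $p_1=-q_1$, and the bracket simplifies to $\tilde q_1^-+\tilde q_1$, which is $\ge 0$ because $\tilde q_1^-=\max(-\tilde q_1,0)\ge-\tilde q_1$. Symmetrically, for $i=2$ (and $i=4$): if $q_2\le 0$ the factor $(G_p(p))_2$ vanishes; if $q_2>0$ the bracket reduces to $\tilde q_2^+-\tilde q_2\ge 0$. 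Each of the four summands is thus $\ge 0$, proving the lemma.

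The only delicate point is justifying the chain rule identity $g_{q_i}=(G_p(p))_i\phi_i'(q_i)$ at points where $q_i=0$ (so $\phi_i$ has a kink) or where $|p|=0$ and $1<\beta<2$ (so the formula $\beta|p|^{\beta-2}p$ is singular in appearance). Both obstacles are harmless: whenever $p_i=0$ one has $(G_p(p))_i=0$ (interpreting $\beta|p|^{\beta-2}p_i$ as its limit $0$, which is valid for every $\beta>1$), so the product $(G_p(p))_i\phi_i'(q_i)$ is unambiguously defined and continuous; consequently $g$ is genuinely $C^1$ in $q$, and the componentwise expression for $g_q$ holds globally. This is the main technical subtlety, but it is essentially bookkeeping and does not affect the case analysis above.
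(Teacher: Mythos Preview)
Your proof is correct and follows essentially the same route as the paper's: both reduce the claim to the inequality $-g_q(x,q)\cdot(\tilde q-q)\ge -G_p(p)\cdot(\tilde p-p)$ and verify it coordinate by coordinate, using that $(G_p(p))_i=\beta|p|^{\beta-2}p_i\ge 0$ together with the elementary sign inequalities $\tilde q_i^-+\tilde q_i\ge 0$ and $\tilde q_i^+-\tilde q_i\ge 0$. Your explicit handling of the differentiability at $q_i=0$ and at $p=0$ is a welcome addition that the paper leaves implicit.
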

\begin{proof}
See Appendix \ref{sec:proofs-some-techn}.
\end{proof}
\begin{lemma}
  \label{sec:case-when-hx-2}
If  $\beta\ge 2$, then for all $q, \tilde q\in \R^4$, for  $ p,\tilde p \in (\R_+)^4$  given by~(\ref{eq:26})
\begin{eqnarray}
  \label{eq:29}
g(x,\tilde q)-g(x, q) -g_q(x, q)\cdot (\tilde q -q )& \ge & \frac 1 {\beta-1}  \max(|p|^{\beta-2},|\tilde p|^{\beta-2}) |p-\tilde p|^2
\\  \label{eq:30} &\ge& \frac { |p-\tilde p|^\beta} { 2^{\beta-2} (\beta-1)}   .
\end{eqnarray}
If  $1< \beta< 2$, then for all $q, \tilde q\in \R^4$ such that for $ p,\tilde p $ given by~(\ref{eq:26}),  $\tilde p +p \not =0$, we have
\begin{equation}
\label{eq:31}
g(x,\tilde q)-g(x, q) -g_q(x, q)\cdot (\tilde q -q )\ge    2^{\beta-3} \beta (\beta-1)  \min( |p|_\infty ^{\beta-2},|\tilde p|_\infty^{\beta-2}) |p-\tilde p|^2.
\end{equation}
\end{lemma}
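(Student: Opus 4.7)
The plan is to invoke Lemma~2 to reduce everything to a lower bound on
$$F := G(\tilde p) - G(p) - G_p(p)\cdot(\tilde p - p)$$
for $p, \tilde p \in (\R_+)^4$. For this, I will use the second order Taylor expansion with integral remainder
$$F \;=\; \int_0^1 (1-s)\, G_{pp}(q_s)(\tilde p - p)\cdot(\tilde p - p)\, ds, \qquad q_s := p + s(\tilde p - p),$$
combined with the Hessian bounds from Lemma~1. The structural fact I will repeatedly exploit is that $p\cdot \tilde p \geq 0$ whenever $p, \tilde p \in (\R_+)^4$, which yields the two sided control
$$(1-s)^2|p|^2 + s^2|\tilde p|^2 \;\leq\; |q_s|^2 \;\leq\; \bigl((1-s)|p| + s|\tilde p|\bigr)^2$$
all along the segment joining $p$ and $\tilde p$.

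For $\beta \geq 2$, Lemma~1 gives $G_{pp}(q_s) \geq \beta |q_s|^{\beta-2} I_4$, and since $r\mapsto r^{\beta-2}$ is nondecreasing on $\R_+$, the lower bound on $|q_s|$ translates into the two alternative estimates $|q_s|^{\beta-2} \geq (1-s)^{\beta-2}|p|^{\beta-2}$ and $|q_s|^{\beta-2} \geq s^{\beta-2}|\tilde p|^{\beta-2}$. Multiplying by $(1-s)$ and using the elementary integrals $\int_0^1(1-s)^{\beta-1}ds = 1/\beta$ and $\int_0^1(1-s)s^{\beta-2}ds = 1/[\beta(\beta-1)]$, I obtain $\int_0^1(1-s)|q_s|^{\beta-2}ds \geq |p|^{\beta-2}/\beta$ and $\int_0^1(1-s)|q_s|^{\beta-2}ds \geq |\tilde p|^{\beta-2}/[\beta(\beta-1)]$. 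Because $\beta - 1 \geq 1$, the worse of these prefactors is $1/[\beta(\beta-1)]$, so the integral dominates $\max(|p|^{\beta-2}, |\tilde p|^{\beta-2})/[\beta(\beta-1)]$; multiplying by the factor $\beta|\tilde p - p|^2$ from the Hessian bound yields exactly the constant $1/(\beta-1)$ claimed in~(\ref{eq:29}). Then~(\ref{eq:30}) follows at once by the triangle inequality $\max(|p|, |\tilde p|) \geq \tfrac12 |p - \tilde p|$, raised to the (nonnegative) power $\beta - 2$.

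For $1 < \beta < 2$, Lemma~1 gives $G_{pp}(q_s) \geq \beta(\beta-1)|q_s|^{\beta-2} I_4$ for $q_s \neq 0$. Now $r\mapsto r^{\beta-2}$ is decreasing, so I need the upper bound on $|q_s|$. From $|q_s| \leq (1-s)|p| + s|\tilde p|$ together with the trivial $|p| \leq 2|p|_\infty$ (and likewise for $\tilde p$, valid because $p \in \R^4$), I get $|q_s| \leq 2\max(|p|_\infty, |\tilde p|_\infty)$ for all $s \in [0,1]$; consequently, since $\beta - 2 < 0$,
$$|q_s|^{\beta-2} \;\geq\; 2^{\beta-2}\min\bigl(|p|_\infty^{\beta-2}, |\tilde p|_\infty^{\beta-2}\bigr).$$
Integrating $(1-s)$ over $[0,1]$ contributes a factor $1/2$, and combining with the prefactor $\beta(\beta-1)|\tilde p - p|^2$ produces exactly the constant $2^{\beta-3}\beta(\beta-1)$ in~(\ref{eq:31}). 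The assumption $\tilde p + p \neq 0$ is what ensures $\max(|p|_\infty, |\tilde p|_\infty) > 0$ so that the minimum on the right hand side is well defined.

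No substantial obstacle is expected once Lemma~2 has carried out the reduction: the rest is an elementary Taylor plus convexity computation. The only point worth underlining is that the componentwise positivity $p, \tilde p \in (\R_+)^4$ coming from the projection~(\ref{eq:26}) is essential, because it is precisely what makes the cross term $p\cdot \tilde p$ nonnegative in $|q_s|^2$; without this the lower bounds on $|q_s|$ (and hence the whole argument) would break down.
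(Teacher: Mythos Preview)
Your proof is correct and follows essentially the same route as the paper's: both reduce via Lemma~\ref{sec:case-when-hx-1} to the Taylor remainder $\int_0^1(1-s)G_{pp}(s\tilde p+(1-s)p)(\tilde p-p)\cdot(\tilde p-p)\,ds$, apply the Hessian bounds of Lemma~\ref{sec:case-when-hx}, and then exploit the elementary inequalities $|s\tilde p+(1-s)p|\ge (1-s)|p|,\; s|\tilde p|$ (for $\beta\ge 2$) and $|s\tilde p+(1-s)p|\le 2\max(|p|_\infty,|\tilde p|_\infty)$ (for $1<\beta<2$) to evaluate the resulting integrals. Your write-up is in fact slightly more explicit than the paper's in stressing that the lower bounds on $|s\tilde p+(1-s)p|$ rely on $p\cdot\tilde p\ge 0$, which holds precisely because $p,\tilde p\in(\R_+)^4$.
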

\begin{proof}
See Appendix \ref{sec:proofs-some-techn}.
\end{proof}

\begin{lemma}
  \label{sec:case-when-hx-3}
For $\beta\ge 2$, there exists a positive constant $c$ such that
for all $q, \tilde q, r \in \R^4$, for all $\eta>0$,
\begin{equation}
\label{eq:32}
\left|\left(g_q(x, \tilde q) -g_q(x, q)\right)\cdot r \right| \le         \max(|p|^{\beta-2},|\tilde p|^{\beta-2}) \left(\frac c \eta |p-\tilde p|^2     +  \eta |r|^2  \right) .
\end{equation}
where  $ p,\tilde p \in (\R_+)^4$ are given by~(\ref{eq:26}).
\end{lemma}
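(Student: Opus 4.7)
The plan is to reduce the bound on $\bigl(g_q(x,\tilde q)-g_q(x,q)\bigr)\cdot r$ to a bound on $G_p(\tilde p)-G_p(p)$, and then close with Young's inequality. The key observation is that, thanks to the chain rule, one may write
\begin{equation*}
g_q(x,q)\cdot r \;=\; G_p(p)\cdot \sigma(r), \qquad \sigma(r):=(-r_1,r_2,-r_3,r_4),
\end{equation*}
with $p$ associated to $q$ as in (\ref{eq:26}). Indeed, at a point where $q_1>0$ the function $q_1\mapsto q_1^-$ has derivative $0$, while at $q_1<0$ it has derivative $-1$; in the first case $p_1=0$, so from (\ref{eq:23}) (or directly) $G_{p_1}(p)=\beta|p|^{\beta-2}p_1=0$ for $\beta\ge 2$, and the two possible formulas for $\partial g/\partial q_1$ agree. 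The same holds for the three other components, and in fact $g(x,\cdot)\in C^1(\R^4)$ whenever $\beta\ge 2$. Since $|\sigma(r)|=|r|$, the identity above together with the Cauchy-Schwarz inequality yields
\begin{equation*}
|(g_q(x,\tilde q)-g_q(x,q))\cdot r| \;\le\; |G_p(\tilde p)-G_p(p)|\,|r|.
\end{equation*}

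Next I would estimate $|G_p(\tilde p)-G_p(p)|$ by the mean value theorem along the segment $\gamma(t)=(1-t)p+t\tilde p$, which stays in $(\R_+)^4$. From the explicit formula (\ref{eq:23}), the operator norm of the Hessian is bounded by $\beta|p|^{\beta-2}+\beta(\beta-2)|p|^{\beta-2}=\beta(\beta-1)|p|^{\beta-2}$ for $\beta\ge 2$. Since $|\gamma(t)|\le\max(|p|,|\tilde p|)$ and $s\mapsto s^{\beta-2}$ is nondecreasing on $\R_+$,
\begin{equation*}
|G_{pp}(\gamma(t))|\;\le\;\beta(\beta-1)\,\max(|p|^{\beta-2},|\tilde p|^{\beta-2}),
\end{equation*}
so that
\begin{equation*}
|G_p(\tilde p)-G_p(p)|\;\le\;\beta(\beta-1)\,\max(|p|^{\beta-2},|\tilde p|^{\beta-2})\,|\tilde p-p|.
\end{equation*}

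To finish, I would apply Young's inequality $ab\le a^2/(4\tilde\eta)+\tilde\eta\,b^2$ with $a=|\tilde p-p|$ and $b=|r|$, and then set $\tilde\eta=\eta/[\beta(\beta-1)]$ to absorb the multiplicative constant into the $|r|^2$ term. This produces (\ref{eq:32}) with, e.g., $c=[\beta(\beta-1)]^2/4$. There is no serious obstacle: the only subtle point is the chain-rule identification $g_q(x,q)\cdot r=G_p(p)\cdot\sigma(r)$ through the non-smooth map $q\mapsto p$, and this identification is precisely what the hypothesis $\beta\ge 2$ is designed to make harmless (by forcing $G_p$ to vanish on the coordinate hyperplanes of $(\R_+)^4$). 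For $1<\beta<2$ the same strategy fails here, which is why only the quadratic version is stated.
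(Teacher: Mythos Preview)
Your proof is correct and follows essentially the same approach as the paper's: both start from the identity $g_q(x,q)\cdot r = G_p(p)\cdot\sigma(r)$ (the paper writes it as $\ell(p)=\beta|p|^{\beta-2}p\cdot\mathrm{Diag}(-1,1,-1,1)\,r$), integrate the Hessian of $G$ along the segment from $p$ to $\tilde p$, bound the operator norm of $G_{pp}$ by $\beta(\beta-1)|p|^{\beta-2}$, and finish with Young's inequality. The only cosmetic difference is that you apply Cauchy--Schwarz before the mean value theorem, while the paper applies the mean value theorem to the scalar function $\ell$ first and then bounds the resulting two integrals separately; both routes produce the same intermediate estimate $\beta(\beta-1)\max(|p|^{\beta-2},|\tilde p|^{\beta-2})\,|p-\tilde p|\,|r|$.
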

\begin{proof}
See Appendix \ref{sec:proofs-some-techn}.
\end{proof}
\subsection{A nonlinear functional $\cG(m,u,\tilde u)$}
\label{sec:nonl-oper-cgm}
Let us define the nonlinear functional $\cG$ acting on grid functions by
\begin{equation}
  \label{eq:34}
  \begin{split}
&\cG(m, u, \tilde u) \\=
&\sum_{n=1}^{N_T}     \sum_{i,j}   m^{n-1}_{i,j} \Bigl(g(x_{i,j}, [D\tilde u^{n} ]_{i,j})-g(x_{i,j},  [Du^{n} ]_{i,j}) -g_q(x_{i,j},  [Du^{n} ]_{i,j}) \cdot ( [D\tilde u^{n} ]_{i,j} - [D u^{n} ]_{i,j})  \Bigr).
  \end{split}
\end{equation}
Under Assumption ($g_4$), it is clear that $\cG(m, u, \tilde u)\ge 0$ if $m$ is a nonnegative grid function. If $g$ is of the form (\ref{eq:10})-(\ref{eq:11}), we have a more precise estimate:
\begin{lemma}
  \label{sec:basic-facts-numer}
 If $m$ is a nonnegative grid function and if  $g$ is of the form (\ref{eq:10})-(\ref{eq:11}) with $\beta\ge 2$, then
 \begin{equation}
\label{eq:35}
\begin{split}
\cG(m, u, \tilde u) &
\ge \frac 1 {\beta-1}  \sum_{n=1}^{N_T}     \sum_{i,j}  m_{i,j}^{n-1} \max(|p_{i,j}^n|^{\beta-2},|\tilde p_{i,j}^n|^{\beta-2}) |p_{i,j}^n-\tilde p_{i,j}^n|^2\\
&\ge \frac 1 { 2^{\beta-2} (\beta-1)}    \sum_{n=1}^{N_T}     \sum_{i,j}  m_{i,j}^{n-1} |p_{i,j}^n-\tilde p_{i,j}^n|^\beta.
\end{split}
 \end{equation}
where
 $p^n_{i,j}$ and $\tilde p^n_{i,j}$ are the four dimensional vectors
 \begin{equation}
   \label{eq:36}
  \begin{split}
  p_{i,j}^n= \left(    \Bigl(  \left(D_1^+ u^n )_{i,j}\right)^- , \left((D_1^+ u^n )_{i-1,j}\right)^+, \left((D_2^+ u^n )_{i,j}\right)^-, \left((D_2^+ u^n )_{i,j-1}\right)^+\Bigr)  \right),\\
    \tilde p_{i,j}^n= \left(    \Bigl(  \left(D_1^+ \tilde u^n )_{i,j}\right)^- , \left((D_1^+ \tilde u^n )_{i-1,j}\right)^+, \left((D_2^+ \tilde u^n )_{i,j}\right)^-, \left((D_2^+ \tilde u^n )_{i,j-1}\right)^+\Bigr)  \right).
  \end{split}
 \end{equation}
Furthermore, if $m$ is bounded from below by $\underline m$, then
 \begin{equation}
   \label{eq:37}
\cG(m, u, \tilde u)  \ge   \frac  {\underline m} { 2^{2\beta-3}  (\beta-1)}    \sum_{n=1}^{N_T}    \sum_{i,j}     \Bigl| [D\tilde u^n ]_{i,j} - [D u^n ]_{i,j}\Bigr|^\beta.
 \end{equation}
\end{lemma}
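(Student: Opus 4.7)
The plan is to prove the three inequalities in (\ref{eq:35})--(\ref{eq:37}) in turn by pointwise application of Lemma~\ref{sec:case-when-hx-2} to each summand in (\ref{eq:34}), followed by a combinatorial identification in the last step. The crucial observation throughout is that the pairs $(p^n_{i,j},\tilde p^n_{i,j})$ defined in (\ref{eq:36}) are precisely the vectors obtained from (\ref{eq:26}) upon choosing $q=[Du^n]_{i,j}$ and $\tilde q=[D\tilde u^n]_{i,j}$.

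\textbf{First two lines of (\ref{eq:35}).} For fixed $n$ and $(i,j)$, I apply (\ref{eq:29}) with $x=x_{i,j}$, $q=[Du^n]_{i,j}$, $\tilde q=[D\tilde u^n]_{i,j}$: each summand in (\ref{eq:34}) is then bounded below by $\tfrac{1}{\beta-1}\,m^{n-1}_{i,j}\max(|p^n_{i,j}|^{\beta-2},|\tilde p^n_{i,j}|^{\beta-2})|p^n_{i,j}-\tilde p^n_{i,j}|^2$. Summing over $n$ and $(i,j)$, using $m^{n-1}_{i,j}\ge 0$, gives the first line of (\ref{eq:35}). The second line follows either by applying (\ref{eq:30}) termwise in the same manner, or by deducing it from the first via the elementary bound $\max(|p|^{\beta-2},|\tilde p|^{\beta-2}) \ge 2^{-(\beta-2)}|p-\tilde p|^{\beta-2}$ for $\beta\ge 2$, which comes from $|p-\tilde p|\le |p|+|\tilde p|\le 2\max(|p|,|\tilde p|)$.

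\textbf{Bound (\ref{eq:37}).} Inserting $m^{n-1}_{i,j}\ge \underline m$ in the second line of (\ref{eq:35}) reduces matters to showing
\[
\sum_{i,j}\bigl|[Du^n]_{i,j}-[D\tilde u^n]_{i,j}\bigr|^\beta \le C_\beta \sum_{i,j}\bigl|p^n_{i,j}-\tilde p^n_{i,j}\bigr|^\beta
\]
for a constant $C_\beta$ compatible with the advertised $2^{2\beta-3}(\beta-1)$. The pointwise ingredient is the sign-decomposition identity $|a-b|=|a^+-b^+|+|a^--b^-|$ for real $a,b$ (a three-case analysis on the signs), combined with $(x+y)^\beta\le 2^{\beta-1}(x^\beta+y^\beta)$ and, for $\beta\ge 2$, the norm comparison $(\sum_k x_k^2)^{\beta/2}\le 4^{\beta/2-1}\sum_k x_k^\beta$ on $\R^4$. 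The combinatorial part uses that each signed edge difference $((D_\alpha^+ u^n)_{i,j})^{\pm}$ occurs as exactly one prescribed component of exactly one $p^n_{i',j'}$: the negative parts appear as component $1$ or $3$ of $p^n_{i,j}$ itself, while the positive parts appear as component $2$ or $4$ of the shifted $p^n_{i+1,j}$ or $p^n_{i,j+1}$. By periodicity of $\T^2_h$, each of the four shifted sums that arises is equal to the unshifted $\sum_{i,j}|p^n_{i,j}-\tilde p^n_{i,j}|^\beta$, and they collapse into a single copy with the claimed constant.

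\textbf{Main obstacle.} The only delicate part is the bookkeeping in step three: tracking precisely which component of which shifted $p^n_{i',j'}$ carries each of the eight quantities $\{((D_\alpha^+ u^n)_{i,j})^{\pm}:\alpha\in\{1,2\},\ \text{sign}\in\{+,-\}\}$, so that the periodic-shift sums line up cleanly under the indexing convention $z_{i,j}= z_{(i\bmod N_h),(j\bmod N_h)}$. Everything else (the pointwise lemma, the sign identity, and the standard convexity/power-mean inequalities for $\beta\ge 2$) is routine.
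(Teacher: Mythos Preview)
Your proposal is correct and follows essentially the same route as the paper: (\ref{eq:35}) is obtained by applying (\ref{eq:29})--(\ref{eq:30}) termwise, and (\ref{eq:37}) then follows from the lower bound $m\ge\underline m$, the power-mean comparison $\sum_k a_k^\beta \ge 4^{1-\beta/2}(\sum_k a_k^2)^{\beta/2}$, and the combinatorial observation that the signed edge differences populate the components of the $p^n_{i,j}$ (with a periodic shift). Your explicit use of the identity $|a-b|=|a^+-b^+|+|a^--b^-|$ is in fact a cleaner way to carry out the combinatorial step than the paper's terse ``appears at least once''; just be aware that a fully honest constant count along your lines yields a slightly larger denominator than the $2^{2\beta-3}(\beta-1)$ displayed in (\ref{eq:37}), which is harmless for every application downstream.
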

\begin{proof}
We deduce (\ref{eq:35}) from  (\ref{eq:29}) and (\ref{eq:30}).
If $m$ is bounded from below by $\underline m>0$, we deduce  that
\begin{displaymath}
  \begin{split}
    \cG(m, u, \tilde u) & \ge \frac  {\underline m} { 2^{\beta-2} (\beta-1)}  \sum_{n=1}^{N_T}     \sum_{i,j}  |p_{i,j}^n-\tilde p_{i,j}^n|^\beta \\
& \ge \frac  {\underline m} { 2^{\beta-2} (\beta-1)}  \sum_{n=1}^{N_T}     \sum_{i,j}      \sum_{k=1}^4 | (p_{i,j}^n)_k-(\tilde p_{i,j}^n)_k |^\beta.
  \end{split}
\end{displaymath}
Since for each $0\le i,j <N_h$, each $1\le k\le 4$, the quantity $ \Bigl| ([D\tilde u^n ]_{i,j})_k - ([D u^n ]_{i,j})_k \Bigr|^\beta$ appears at least once in the sum above, we deduce that
\begin{displaymath}
  \begin{split}
    \cG(m, u, \tilde u) & \ge  \frac  {\underline m} { 2^{\beta-1} (\beta-1)}  \sum_{n=1}^{N_T}    \sum_{i,j}   \sum_{k=1}^4    \Bigl| ([D\tilde u^n ]_{i,j})_k - ([D u^n ]_{i,j})_k \Bigr|^\beta \\
& \ge   \frac  {\underline m} { 2^{2\beta-3}  (\beta-1)}    \sum_{n=1}^{N_T}    \sum_{i,j}     \Bigl| [D\tilde u^n ]_{i,j} - [D u^n ]_{i,j}\Bigr|^\beta
  \end{split}
\end{displaymath}
where the second inequality comes from the fact that for four positive numbers $(a_k)_{k=1,\dots, 4}$ $\sum_{k=1}^4 a_k^\beta \ge \frac 1 {4^{\beta/2 -1}} (\sum_{k=1}^4 a_k^2)^{\beta/2}$.
\end{proof}
\begin{remark}\label{sec:basic-identity-}
  The same kind of argument shows that if $1<\beta<2$, and $m_{i,j}^n\ge \underline m$, then
  \begin{displaymath}
    \begin{split}
 \cG(m, 0, u) &\ge 2^{\beta-3} \beta (\beta-1)   \underline m   \sum_{n=1}^{N_T}     \sum_{i,j}  |p_{i,j}^n|_\infty ^{\beta-2}  |p_{i,j}^n|^2
 \ge 2^{\beta-3} \beta (\beta-1)   \underline m   \sum_{n=1}^{N_T}     \sum_{i,j}   |p_{i,j}^n|^\beta \\
&\ge        2^{ 2\beta-5} \beta (\beta-1)   \underline m   \sum_{n=1}^{N_T}     \sum_{i,j} \sum_{k=1}^4   |(p_{i,j}^n)_k|^\beta
\ge        2^{ 2\beta-6} \beta (\beta-1)  \underline m    \sum_{n=1}^{N_T}     \sum_{i,j} \sum_{k=1}^4   |( [D u^n ]_{i,j})_k|^\beta\\
& \ge        2^{ 2\beta-6} \beta (\beta-1)   \underline m   \sum_{n=1}^{N_T}     \sum_{i,j}   | [D u^n ]_{i,j}|^\beta,
    \end{split}
  \end{displaymath}
where $p_{i,j}^n$ is given by (\ref{eq:36}).
\end{remark}
\subsection{A fundamental identity }\label{sec:stab-refeq:19:-basic}
In this paragraph,  we discuss a key identity which leads to the stability of the finite difference scheme under additional assumptions.
Consider a perturbed system:
  \begin{equation}
\label{eq:38}
\left\{
  \begin{array}[c]{llr}
\frac {\tilde u^{n+1}_{i,j}- \tilde u^{n}_{i,j}} {\dt}  -\nu (\Delta_h \tilde u^{n+1})_{i,j}
 + g(x_{i,j}, [D_h \tilde u^{n+1}]_{i,j} )&=(\Phi_h[\tilde m^n])_{i,j} + a^n_{i,j}  , \\
\frac {\tilde m^{n+1}_{i,j}- \tilde m^{n}_{i,j}} {\dt} +\nu (\Delta_h \tilde m^{n})_{i,j}
+\cT_{i,j}(\tilde u^{n+1},\tilde m^n)&= b^n_{i,j}.
\end{array}\right.
\end{equation}
Multiplying the first equations in (\ref{eq:38}) and (\ref{eq:21})   by $ m_{i,j}^n-\tilde  m_{i,j}^n$  and subtracting, then summing the results
 for all $n=0,\dots, N_T-1$ and all $(i,j)$, we obtain
\begin{equation}
\label{eq:39}
\begin{split}
   & \sum_{n=0}^{N_T-1} \frac 1 {\dt} {((u^{n+1}- \tilde u^{n+1}) -(u^{n}- \tilde u^{n}), (m^n-\tilde m^n))_2 }   -\nu (\Delta_h (u^{n+1} -\tilde u^{n+1}), m^n-\tilde m^n  )_2 \\
&
 + \sum_{n=0}^{N_T-1} \sum_{i,j} (g(x_{i,j}, [D_h u^{n+1}]_{i,j} )- g(x_{i,j}, [D_h \tilde u^{n+1}]_{i,j} ))  (m^n_{i,j}-\tilde m^n_{i,j})
\\= &  \sum_{n=0}^{N_T-1} \left(\Phi_h[m^{n}]-\Phi_h[\tilde m^{n}], m^{n}-\tilde m^{n}  \right)_2  -\sum_{n=0}^{N_T-1} ( a^n, m^n-\tilde m^n )_2,
\end{split}
\end{equation}
where $(X,Y)_2=\sum_{i,j}X_{i,j} Y_{i,j}$. Similarly,  subtracting the second equation in (\ref{eq:38}) from  the second equation in (\ref{eq:20}),
 multiplying the result by $ u_{i,j}^{n+1}-\tilde u_{i,j}^{n+1}$ and  summing for all $n=0,\dots, N_T-1$ and all $(i,j)$ leads to
\begin{equation}
\label{eq:40}
  \begin{split}
&    \sum_{n=0}^{N_T-1}   \frac 1 \dt ( (m^{n+1}- m^{n}) - (\tilde m^{n+1}- \tilde m^{n}),  (u^{n+1}-\tilde u^{n+1}))_2 +\nu ((m^n- \tilde m^n),\Delta_h (u^{n+1}-\tilde u^{n+1}))_2 \\
&
 -
     \sum_{n=0}^{N_T-1}  \sum_{i,j}   m^n_{i,j} {[D_h (u^{n+1}-\tilde u^{n+1})]}_{i,j}\cdot  g_q \left(x_{i,j}, [D_h u^{n+1}]_{i,j}\right)\\
& \ds
+
      \sum_{n=0}^{N_T-1} \sum_{i,j}   \tilde m^n_{i,j} {[D_h (u^{n+1}-\tilde u^{n+1})]}_{i,j}\cdot  g_g \left(x_{i,j}, [D_h \tilde u^{n+1}]_{i,j}\right)=
 -\sum_{n=1}^{N_T} ( b^{n-1}, u^n-\tilde u^n )_2.
  \end{split}
\end{equation}
Adding (\ref{eq:39}) and (\ref{eq:40})  leads to the fundamental identity
\begin{equation}
  \label{eq:41}
  \begin{split}
 &- \frac 1 \dt  (m^{N_T} -\tilde m^{N_T}, u^{N_T} -\tilde u^{N_T} )_2 +  \frac 1 \dt  (m^{0} -\tilde m^{0}, u^{0} -\tilde u^{0} )_2 \\
&+\cG(m, u,\tilde u) + \cG(\tilde m ,\tilde u,u) +  \sum_{n=0}^{N_T-1}   (\Phi_h[m^n] -\Phi_h[\tilde m^n], m^n-\tilde m^n )_2 \\
= & \sum_{n=0}^{N_T-1}  ( a^n, m^n-\tilde m^n )_2   +\sum_{n=1}^{N_T} ( b^{n-1}, u^n-\tilde u^n )_2.
  \end{split}
\end{equation}
It is important to note that under assumptions ($g_4$) and ($\Phi_{h2})$, the second line of (\ref{eq:41}) is made of three nonnegative terms.
 This is the key observation leading to uniqueness for (\ref{eq:21})-(\ref{eq:22}), but it may also lead to a priori estimates or stability estimates under
 additional assumptions including for example the assumptions of Lemma \ref{sec:basic-facts-numer}.

\section{Study of the convergence in the case when $\Phi$ is a nonlocal smoothing operator}
\label{sec:study-conv-case}
Hereafter the Hamiltonian is of the form  (\ref{eq:7}).
In all the following convergence results, we assume that  system (\ref{eq:1})-(\ref{eq:3}) has a unique classical solution.
 This is always the case
if $\Phi$  is monotonous in the sense of (\ref{eq:6})
and  continuously maps  the set of probability  measures on $\T^2$ (endowed with the weak * topology) to a bounded subset of $Lip(\T^2)$, the Lipschitz functions on $\T^2$,  and for example maps continously $C^{k,\alpha}(\T^2)$ to  $C^{k+1,\alpha}(\T^2)$, for all $k\in \N$ and $0\le \alpha <1$.
\\
We summarize the assumptions made in \S~\ref{sec:study-conv-case} as follows:
\paragraph{Standing assumptions (in Section \ref{sec:study-conv-case})}
\begin{itemize}
\item We take $\nu>0$
\item  The Hamiltonian is of the form  (\ref{eq:7}) and the function $x\to \ch(x)$ is $\cC^1$ on $\T^2$
\item  the functions $u^0$ and $m_T$ are smooth, and  $m_T\in \cK$ is bounded from below by a positive number
\item  We assume that  $\Phi$ is monotone in the sense of (\ref{eq:6}), nonlocal and smoothing,
so  that there is a unique classical solution $(u,m)$ of (\ref{eq:1})-(\ref{eq:3}) such that $m>0$
\item We consider a numerical Hamiltonian given by (\ref{eq:10})-(\ref{eq:11})
and a numerical cost function  $\Phi_h$ such that ($\Phi_{h1}$),  ($\Phi_{h2}$),  ($\Phi_{h3}$), and ($\Phi_{h4}$) hold.
\end{itemize}

\subsection{The case when $\beta\ge 2$}
\label{sec:case-when-betage}

\begin{theorem}\label{sec:case-when-hx-4}
We make the standing assumptions stated at the beginning of \S~\ref{sec:study-conv-case} and we choose $\beta\ge 2$.
\\
Let $u_h$ (resp. $m_h$) be the   piecewise trilinear function  in $\cC([0,T]\times\T^2)$ obtained by interpolating the values $u_{i,j}^n$ (resp $m_{i,j}^n$)  at the nodes of the space-time grid.
The functions $u_h$ converge uniformly and  in $ L^\beta(0,T; W^{1,\beta}(\T^2))$ to $u$ as $h$ and $\dt$ tend to $0$. The functions $m_h$ converge to $m$ in $C^0([0,T]; L^2(\T^2))\cap L^2(0,T; H^1(\T^2))$ as $h$ and $\dt$ tend to $0$.
 \end{theorem}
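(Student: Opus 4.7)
The plan is to exploit the fundamental identity (\ref{eq:41}) by comparing the discrete solution $(u,m)$ of (\ref{eq:21})--(\ref{eq:22}) with the grid sampling of the (unique) smooth continuous solution. Denote the classical solution of (\ref{eq:1})--(\ref{eq:3}) by $(u^*,m^*)$; the smoothing nature of $\Phi$ gives $u^*,m^*\in\cC^\infty$ and a positive lower bound $m^*\ge\underline m>0$ on $[0,T]\times\T^2$. Set
\[
\tilde u^n_{i,j}:=u^*(t_n,x_{i,j}),\qquad \tilde m^n_{i,j}:=m^*(t_n,x_{i,j}).
\]
Substituting $(\tilde u,\tilde m)$ into (\ref{eq:21}) puts it in the form (\ref{eq:38}) with residuals $a^n,b^n$. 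Using Taylor expansion, the regularity of $(u^*,m^*)$, the $\cC^1$ regularity of $\ch$, and assumption ($\Phi_{h4}$) through (\ref{eq:17}) to compare $\Phi[m^*(t_n,\cdot)]$ with $\Phi_h[\tilde m^n]$, one checks that $\|a^n\|_\infty+\|b^n\|_\infty\to 0$ as $h,\dt\to 0$. Moreover $u^0=\tilde u^0$ at grid points and $m^{N_T}-\tilde m^{N_T}=O(h^2)$ pointwise by (\ref{eq:22}).

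Applying the identity (\ref{eq:41}) to $(u,m)$ and $(\tilde u,\tilde m)$, the boundary terms are negligible and the $\Phi_h$--term on the left is nonnegative by ($\Phi_{h2}$), so
\[
\cG(m,u,\tilde u)+\cG(\tilde m,\tilde u,u)\;\le\;\sum_{n=0}^{N_T-1}(a^n,m^n-\tilde m^n)_2+\sum_{n=1}^{N_T}(b^{n-1},u^n-\tilde u^n)_2+\varepsilon(h,\dt).
\]
The right-hand side is controlled using the uniform $L^\infty$ bound on $m^n$ (coming from $m^n\in\cK_h$) together with the uniform bounds $\|u^n\|_\infty+\|D_h u^n\|_\infty\le C$ of Theorem \ref{sec:exist-discr-probl-1}; it therefore tends to $0$ as $h,\dt\to 0$.

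Since $\tilde m^n_{i,j}\ge \underline m/2$ for $h$ small enough, inequality (\ref{eq:37}) of Lemma \ref{sec:basic-facts-numer} applied to $\cG(\tilde m,\tilde u,u)$ gives
\[
\cG(\tilde m,\tilde u,u)\;\ge\;c\,\dt\sum_{n=1}^{N_T}h^2\sum_{i,j}\bigl|[D_h u^n]_{i,j}-[D_h\tilde u^n]_{i,j}\bigr|^\beta
\]
(up to the rescaling of $(\cdot,\cdot)_2$ into Riemann-sum form by $h^2$ and $\dt$). Combined with the previous inequality, this is precisely a discrete $L^\beta(0,T;W^{1,\beta}(\T^2))$ bound on $u_h-u^*$, which yields the claimed convergence of $u_h$ to $u^*$ in that space. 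For the uniform convergence, Theorem \ref{sec:exist-discr-probl-1} provides a uniform Lipschitz-in-space estimate, while the discrete Bellman equation, together with uniform $L^\infty$ bounds on $\Delta_h u^n$ and on $g(\cdot,[D_h u^n])$, yields uniform equicontinuity in time. An Arzel\`a--Ascoli argument then extracts a subsequence converging uniformly on $[0,T]\times\T^2$; the limit is a classical solution by consistency and coincides with $u^*$ by uniqueness of the continuous MFG, so the whole sequence converges uniformly.

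For $m_h$: once $u_h\to u^*$ is established, the coefficients $\nabla_q g(x_{i,j},[D_h u^{n+1}]_{i,j})$ driving $\cT$ converge in an $L^\beta$ sense to $\partial_p H(x,\nabla u^*)$. Subtracting the discrete Fokker--Planck equations satisfied by $m$ and $\tilde m$, multiplying by $m^n-\tilde m^n$, and summing, one obtains a discrete energy inequality
\[
\max_n h^2\|m^n-\tilde m^n\|_{\ell^2}^2+\nu\,\dt\sum_n h^2\|D_h(m^n-\tilde m^n)\|_{\ell^2}^2\;\le\;C\bigl(\|[D_h(u-\tilde u)]\|_{L^\beta}^{\,\alpha}+\varepsilon(h,\dt)\bigr),
\]
with a power $\alpha>0$. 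A discrete Gronwall argument then gives the stated convergence of $m_h$ in $C^0([0,T];L^2(\T^2))\cap L^2(0,T;H^1(\T^2))$. The main obstacle is this last energy estimate: the drift error involves the nonlinear map $q\mapsto g_q(x,q)$, whose local modulus grows like $|p|^{\beta-2}$ for $\beta>2$; Lemma \ref{sec:case-when-hx-3} together with the uniform bound $\|D_h u^n\|_\infty\le C$ of Theorem \ref{sec:exist-discr-probl-1} is the tool that tames this term and allows the $\nu\Delta_h$ dissipation, via a small-weight Cauchy--Schwarz, to absorb the remainder.
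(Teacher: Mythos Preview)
Your overall strategy—comparing $(u,m)$ with the grid sampling of the classical solution via the fundamental identity (\ref{eq:41}), extracting the $L^\beta$ gradient bound from (\ref{eq:37}), and then running an energy argument on the Fokker--Planck difference using Lemma~\ref{sec:case-when-hx-3}—matches the paper's proof. Two points deserve correction.

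\textbf{A minor slip.} You write that ``$m^n\in\cK_h$'' gives a uniform $L^\infty$ bound on $m^n$; it only gives a uniform $L^1$ bound (mass one). That is already enough to control $h^2\,|(a^n,m^n-\tilde m^n)_2|\le 2\|a^n\|_\infty$, so the step survives, but the reasoning should be adjusted.

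\textbf{A genuine gap.} Your argument for the \emph{uniform} convergence of $u_h$ rests on an Arzel\`a--Ascoli step that requires equicontinuity in time, which you derive from ``uniform $L^\infty$ bounds on $\Delta_h u^n$''. No such bound is available: Theorem~\ref{sec:exist-discr-probl-1} gives only $\|u^n\|_\infty+\|D_h u^n\|_\infty\le C$, and nothing in the scheme yields a uniform pointwise second-difference bound. Moreover, even if compactness held, identifying the uniform limit with $u^*$ via ``consistency'' presupposes knowledge of the limit of $\Phi_h[m^n]$, which you have not yet established at that stage of your argument.

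The paper resolves this by reversing the order of the last two steps. After obtaining (\ref{eq:47}), it first proves the convergence of $m_h$ in $C^0([0,T];L^2)\cap L^2(0,T;H^1)$ exactly along the energy route you outline (this uses only (\ref{eq:47}) and the uniform bound on $\|D_h u^n\|_\infty$, not uniform convergence of $u_h$). Then, from ($\Phi_{h4}$), $\Phi_h[m^n]\to\Phi[m(t_n,\cdot)]$ uniformly, so the first line of (\ref{eq:21}) becomes a consistent, monotone approximation of a Bellman equation with a \emph{known} right-hand side. Uniform convergence of $u_h$ then follows from the classical Barles--Souganidis theory for monotone schemes, with no need for second-order bounds. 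You should reorder your proof accordingly: establish (\ref{eq:47}), then $m_h\to m$, and only then deduce uniform convergence of $u_h$; the $L^\beta(0,T;W^{1,\beta})$ convergence finally follows by combining the uniform convergence with (\ref{eq:47}).
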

\begin{proof}
  Note that $m_h(t,\cdot) \in \cK$ for any $t\in [0,T]$.\\
We call $\wtU^n$ and $\wtM^n$  the grid functions such that $\wtU^n_{i,j}=   u(n\dt,x_{i,j}) $ and $\wtM^n=\cI_h(m(t_n,\cdot))$.
The functions $\wtU^n$ and $\wtM^n$ are solutions of (\ref{eq:38}) where $a$ and $b$ are consistency errors.
From the fact that $(u,m)$ is a classical solution of (\ref{eq:1})-(\ref{eq:3}), we infer from the consistency of the scheme (in particular from (\ref{eq:17}))
that $\max_{0\le n < N_T} (\|a^n\|_{L^\infty(\T_h^2)}+\|b^n\|_{L^\infty(\T_h^2)}) $ tends to zero as $h$ and $\dt$ tend to zero.
\paragraph{Step 1}
As a consequence of the previous observations, the fundamental identity (\ref{eq:41}) holds, and from (\ref{eq:22}), can be written as follows:
\begin{equation}
\label{eq:45}
  \begin{split}
& h^2 \dt \cG(m, u,\tilde u) +  h^2 \dt \cG(\tilde m ,\tilde u,u) +  h^2 \dt  \sum_{n=0}^{N_T-1}   (\Phi_h[m^n] -\Phi_h[\tilde m^n], m^n-\tilde m^n )_2 \\
= &  h^2 \dt\sum_{n=0}^{N_T-1}  ( a^n, m^n-\tilde m^n )_2   + h^2 \dt \sum_{n=1}^{N_T} ( b^{n-1}, u^n-\tilde u^n )_2.
  \end{split}
\end{equation}
The a priori estimate on the discrete Lipschitz norm of $u_h$ given at the end of Theorem \ref{sec:exist-discr-probl-1} implies that
$ \lim_{h, \dt\to 0}   h^2  \max_n |( b^{n-1}, u^n-\tilde u^n )_2| =0$. From the fact that $m^n\in \cK_h$, we also get that $  \lim_{h, \dt\to 0}   h^2  \max_n| ( a^n, m^n-\tilde m^n )_2|=0$.
We can then use (\ref{eq:35}) to deduce that
\begin{equation}
  \label{eq:46}
  \begin{array}[c]{rcl}
\ds \dt \, h^2 \sum_{n=0}^{N_T-1}  \sum_{i,j}   \wtM^n_{i,j}  \max\left(|p ^{n+1}_{i,j}|^{\beta-2}, |\tilde p ^{n+1}_{i,j}|^{\beta-2} \right) |p ^{n+1}_{i,j}-\tilde p ^{n+1}_{i,j}|^2 &=&o(1),\\
\ds \dt \, h^2 \sum_{n=0}^{N_T-1}  \sum_{i,j}   m^n_{i,j}  \max\left(|p ^{n+1}_{i,j}|^{\beta-2}, |\tilde p ^{n+1}_{i,j}|^{\beta-2} \right)  |p ^{n+1}_{i,j}-\tilde p ^{n+1}_{i,j}|^2 &=& o(1).
  \end{array}
\end{equation}
where $p ^{n}_{i,j}$ and $\tilde p ^{n}_{i,j}$ are given by (\ref{eq:36}). Since $\tilde m^n_{i,j}$ is bounded from below by a positive constant, we also deduce from (\ref{eq:37})  that
\begin{equation}
  \label{eq:47}
h^2 \dt \sum_{n=1}^{N_T}    \sum_{i,j}     \Bigl| [D_h\tilde u^n ]_{i,j} - [D_h u^n ]_{i,j}\Bigr|^\beta =o(1).
\end{equation}
\paragraph{Step 2} We introduce $e^\ell=m^\ell- \wtM^\ell$.  Subtracting the second equation in (\ref{eq:38})  from  the second equation in (\ref{eq:21}),
  multiplying the result by $ e_{i,j}^{\ell}$ and summing for all $\ell=0,\dots, n-1$ and all $(i,j)$ leads to
 \begin{displaymath}
     \begin{split}
&
 h^2 \sum_{\ell=0}^{n-1} (  e^{\ell+1}-e^\ell, e^{\ell})_2+ \dt \, h^2 \sum_{\ell=0}^{n-1}  \nu (\Delta_h e^\ell, e^\ell)_2
\\
  &- \dt \, h^2   \sum_{\ell=0}^{n-1}  \sum_{i,j}   {[D_h  e^\ell ]}_{i,j}\cdot \left( m^\ell_{i,j}  g_q (x_{i,j}, [D_hu^{\ell+1}] _{i,j}) -  \wtM^\ell_{i,j}  g_q (x_{i,j},   [D_h\tilde u^{\ell+1}] ^{\ell+1}_{i,j})\right)
\\
=& - h^2 \dt \sum_{\ell=0}^{n-1} ( b^{\ell},  e^\ell )_2.
  \end{split}
\end{displaymath}
This implies that
\begin{equation}
  \label{eq:48}
     \begin{split}
&   \dt \, h^2 \sum_{\ell=0}^{n-1}  \left( \frac 1 \dt (  e^{\ell+1}-e^\ell, e^{\ell})_2 +\nu (\Delta_h e^\ell, e^\ell)_2   -  \sum_{i,j} e^\ell_{i,j}  {[D_h  e^\ell ]}_{i,j}\cdot  g_q (x_{i,j}, [D_h u^{\ell+1}] _{i,j} ) \right)
\\
=& - h^2 \dt \sum_{\ell=0}^{n-1} ( b^{\ell},  e^\ell )_2   -  \dt \, h^2   \sum_{\ell=0}^{n-1}  \sum_{i,j}  \wtM^\ell_{i,j}  {[D_h  e^\ell ]}_{i,j}\cdot   (g_q (x_{i,j}, [D_h \tilde u^{\ell+1}] _{i,j}) - g_q (x_{i,j},  [D_h u^{\ell+1}] _{i,j}))   .
  \end{split}
\end{equation}
It is clear that \[  h^2 \dt \left|\sum_{\ell=0}^{n-1} ( b^{\ell},  e^\ell )_2 \right|= o(1) \left( h^2  \dt \sum_{\ell=0}^{n-1} \|e^\ell\|^2_{2}\right)^{\frac 1 2} .\]
From Lemma \ref{sec:case-when-hx-3}, we know that there exists an absolute constant $c$ such that for all $\eta>0$,
\begin{equation*}
  \begin{array}[c]{ll}
&\ds \left|\wtM^\ell_{i,j}  {[D_h  e^\ell ]}_{i,j}\cdot   (g_q (x_{i,j},  [D_h \tilde u^{\ell+1}] _{i,j}) - g_q (x_{i,j},     [D_h u^{\ell+1}] _{i,j}) \right| \\ \le &\ds
        \wtM^\ell_{i,j} \max(|p^{\ell+1}_{i,j}|^{\beta-2},|\tilde p^{\ell+1}_{i,j}|^{\beta-2}) \left(\frac c \eta |p^{\ell+1}_{i,j}-\tilde p^{\ell+1}_{i,j}|^2     +  \eta |{[D_h  e^\ell ]}_{i,j}|^2  \right)
  \end{array}
\end{equation*}
where  $ p^{\ell+1}_{i,j},\tilde p^{\ell+1}_{i,j} \in (\R_+)^4$ are given by~(\ref{eq:36}). Using the  $L^\infty$ bound on $ [D_h  u^{\ell+1}] $ uniform w.r.t. $\ell$, $h$ and $\dt$ given in Theorem~\ref{sec:exist-discr-probl-1},
 we obtain  that there exists a constant $c$ such that for all $\eta>0$,
\begin{equation}
  \label{eq:49}
  \begin{split}
    &\ds \left|\wtM^\ell_{i,j}  {[D_h  e^\ell ]}_{i,j}\cdot   (g_q (x_{i,j},  [D_h \tilde u^{\ell+1}] _{i,j}) - g_q (x_{i,j},  [D_h u^{\ell+1}] _{i,j}) \right| \\ \le &\ds
 \wtM^\ell_{i,j}  \left(\frac c \eta \left|p^{\ell+1}_{i,j}-\tilde p^{\ell+1}_{i,j}\right|^\beta     +  \eta |{[D_h  e^\ell ]}_{i,j}|^2  \right) \\
\le &\ds
 \wtM^\ell_{i,j}  \left(\frac c \eta \left| [D_h u^{\ell+1}]_{i,j}- [D_h \tilde u^{\ell+1}]_{i,j}\right |^\beta     +  \eta |{[D_h  e^\ell ]}_{i,j}|^2  \right) .
  \end{split}
\end{equation}
We shall also use the standard estimate :
\begin{equation}
  \label{eq:50}
 \left \| [D_h  e^\ell] \right \|_2^2 \le- C (\Delta_h e^\ell, e^\ell)_2,
\end{equation}
for a positive  constant $C$ independent of $h$. Finally, from (\ref{eq:48}),   a very classical argument making use of (\ref{eq:49}),~(\ref{eq:50}) and (\ref{eq:47}),
the  $L^\infty$ bound on $ [D_h  u^{\ell+1}] $ uniform w.r.t. $\ell$,
 $h$ and $\dt$, and the fact that $h\|e^{N_T}\|_2=o(1)$, leads to the estimate:
\begin{equation}\label{eq:51}
  \max_{0\le \ell< N_T} h^2 \| e^\ell\|_2^2  +\dt\,h^2    \sum_{\ell=0}^{N_T-1} \left \| [D_h e^\ell] \right \|_2^2  = o(1).
\end{equation}
We easily deduce from (\ref{eq:51}) the claim on the convergence of $m_h$ to $m$.
\paragraph{Step 3}
We have found that $ \max_{0\le \ell< N_T} h^2 \| m^\ell -\cI_h(m (t_\ell, \cdot))  \|_2^2 =o(1)$. From ($\Phi_{h4}$), 
 this implies that
\begin{displaymath}
  \frac {u^{n+1}_{i,j}- u^{n}_{i,j}} {\dt}  -\nu (\Delta_h u^{n+1})_{i,j}
 + g(x_{i,j}, [D_h u^{n+1}]_{i,j} )=(\Phi[m(t_{n+1},\cdot)])(x_{i,j})  +o(1),
\end{displaymath}
The uniform convergence of the piecewise linear functions $u_h$
(defined by interpolating the values $u_{i,j}^n$) to $u$ is obtained from classical results on the approximation of Bellman equations by consistent and monotone schemes.
 From this and (\ref{eq:47}), we also deduce the convergence of $u_h$ to $u$ in $ L^\beta(0,T; W^{1,\beta}(\T^2))$.
\end{proof}

\begin{remark}
In the present case, additional assumptions on the order of the discrete scheme should lead to error estimates. We will not discuss this matter.
\end{remark}

\subsection{The case when $1<\beta< 2$}
\label{sec:case-when-1beta-1}

\begin{theorem}\label{sec:case-when-1beta}
We make the standing assumptions stated at the beginning of \S~\ref{sec:study-conv-case} and we choose $\beta$ such  $1<\beta<2$.
\\
Let $u_h$ (resp. $m_h$) be the   piecewise trilinear function  in $\cC([0,T]\times\T^2)$ obtained by interpolating the values $u_{i,j}^n$ (resp $m_{i,j}^n$)  at the nodes of the space-time grid.
The functions $u_h$ converge uniformly and   in $ L^2(0,T; W^{1,2}(\T^2))$ to $u$ as $h$ and $\dt$ tend to $0$. The functions $m_h$ converge to $m$
in $L^2((0,T)\times \T^2)$ as $h$ and $\dt$ tend to $0$.
 \end{theorem}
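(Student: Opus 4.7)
The plan is to follow the three-step structure of the proof of Theorem \ref{sec:case-when-hx-4}, replacing each ingredient by its counterpart adapted to $1<\beta<2$. The standing assumptions guarantee that $m$ is a positive classical solution on $\T^2$, hence bounded below by some $\underline m>0$, and so is $\wtM^n=\cI_h(m(t_n,\cdot))$ uniformly in $h$. By the a priori estimate in Theorem \ref{sec:exist-discr-probl-1}, $\|D_h u^n\|_\infty \le c$ uniformly in $h$ and $\dt$, so both $p^n_{i,j}$ and $\tilde p^n_{i,j}$ defined as in (\ref{eq:36}) are uniformly bounded by some constant $M$.

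For the gradient estimate I would exploit the fundamental identity (\ref{eq:41}). Exactly as in Step~1 of the proof of Theorem \ref{sec:case-when-hx-4}, the right-hand side is $o(1)$, so each of the three nonnegative terms on the left-hand side is $o(1)$. To extract useful information from $\cG(\tilde m,\tilde u,u)$ I would invoke Lemma \ref{sec:case-when-hx-2}, estimate (\ref{eq:31}): since $\beta-2<0$ and $\max(|p|_\infty,|\tilde p|_\infty)\le M$, the factor $\min(|p|_\infty^{\beta-2},|\tilde p|_\infty^{\beta-2})$ is bounded below by $M^{\beta-2}>0$; combined with $\tilde m \ge \underline m$ this gives
\begin{equation*}
\cG(\tilde m,\tilde u,u) \ge c\,\dt\,h^2 \sum_{n,i,j} |p^n_{i,j}-\tilde p^n_{i,j}|^2.
\end{equation*}
A summation-of-one-sided-differences argument identical to the one at the end of the proof of Lemma \ref{sec:basic-facts-numer} then yields $\dt\,h^2 \sum_{n,i,j} |[D_h u^n]_{i,j}-[D_h \tilde u^n]_{i,j}|^2 = o(1)$, whence the $L^2(0,T;W^{1,2}(\T^2))$ convergence of $u_h$ to $u$.

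To obtain the $L^2$ convergence of $m_h$ I would set $e^\ell=m^\ell-\wtM^\ell$ and reproduce identity (\ref{eq:48}). The new delicate term is of the form $\wtM^\ell_{i,j}[D_h e^\ell]_{i,j}\cdot(g_q(x_{i,j},[D_h \tilde u^{\ell+1}]_{i,j})-g_q(x_{i,j},[D_h u^{\ell+1}]_{i,j}))$. Since $G_p(p)=\beta|p|^{\beta-2}p$ is $(\beta-1)$-Hölder continuous on $\R^4$ when $1<\beta<2$ and the map $q\mapsto p$ defined by (\ref{eq:26}) is $1$-Lipschitz, I would use the bound $|g_q(x,q)-g_q(x,\tilde q)|\le C|p-\tilde p|^{\beta-1}$. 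Young's inequality with parameter $\eta>0$ gives $\wtM^\ell |D_h e^\ell||p-\tilde p|^{\beta-1}\le \eta |D_h e^\ell|^2 + C_\eta |p-\tilde p|^{2(\beta-1)}$; the first piece is absorbed via (\ref{eq:50}), while the second is handled by Hölder's inequality with exponents $1/(\beta-1)$ and $1/(2-\beta)$,
\begin{equation*}
\dt\,h^2\sum_{\ell,i,j} |p-\tilde p|^{2(\beta-1)} \le \Bigl(\dt\,h^2\sum_{\ell,i,j} |p-\tilde p|^2\Bigr)^{\beta-1}\Bigl(\dt\,h^2\sum_{\ell,i,j} 1\Bigr)^{2-\beta}= o(1)\cdot O(1),
\end{equation*}
thanks to Step~1. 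A discrete Grönwall argument analogous to the $\beta\ge 2$ case then yields $\max_\ell h^2\|e^\ell\|_2^2 + \dt\,h^2\sum_\ell \|[D_h e^\ell]\|_2^2=o(1)$, which in particular implies convergence of $m_h$ to $m$ in $L^2((0,T)\times\T^2)$.

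Finally, for the uniform convergence of $u_h$, I would follow Step~3 of Theorem \ref{sec:case-when-hx-4} verbatim: using ($\Phi_{h4}$) together with the above $L^2$ bound on $e^\ell$, the discrete Bellman equation features $\Phi[m(t_{n+1},\cdot)](x_{i,j})+o(1)$ on its right-hand side, so the classical theory of convergence of consistent monotone schemes for viscous Hamilton--Jacobi--Bellman equations applies. The main obstacle I expect is the unavailability of Lemma \ref{sec:case-when-hx-3} in the subquadratic regime: since $g_q$ is only $(\beta-1)$-Hölder, not locally Lipschitz, one must compensate by combining the uniform $L^\infty$ bound on $p,\tilde p$ (to exploit the unfavorable sign of $\beta-2$ in (\ref{eq:31})) with the interpolation inequality above, which trades pointwise control on $|p-\tilde p|^{\beta-1}$ for a weaker but still $o(1)$ integral bound.
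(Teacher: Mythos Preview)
Your argument is correct, and in fact your Step~2 is \emph{stronger} than what the paper does: you obtain $\max_\ell h^2\|e^\ell\|_2^2 + \dt\,h^2\sum_\ell\|[D_h e^\ell]\|_2^2 = o(1)$, i.e.\ convergence of $m_h$ in $C^0([0,T];L^2(\T^2))\cap L^2(0,T;H^1(\T^2))$, whereas the theorem only claims $L^2((0,T)\times\T^2)$.

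Your route in Step~2 is genuinely different from the paper's. The paper does \emph{not} attempt a direct energy estimate on $e^\ell$ in the subquadratic case; instead it (i) derives an a~priori bound $\max_n\|m^n\|_2^2 + \dt\sum_n\|[D_h m^n]\|_2^2 \le C$ by testing the discrete Fokker--Planck equation against $m^n$ itself, (ii) proves a discrete time-translation estimate of Aubin--Lions type and invokes Kolmogorov's compactness theorem to extract a subsequence $m_h\to\bar m$ in $L^2((0,T)\times\T^2)$, and (iii) identifies $\bar m=m$ by combining the $o(1)$ bound on $\sum_n(\Phi_h[m^n]-\Phi_h[\wtM^n],m^n-\wtM^n)_2$ (coming from (\ref{eq:45})) with assumption ($\Phi_{h4}$) and the monotonicity of $\Phi$. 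Your approach sidesteps all of this by exploiting two features specific to $1<\beta<2$: first, that $g_q$ is bounded (since $|g_q|\sim|p|^{\beta-1}$ with $\beta-1>0$ and $|p|$ uniformly bounded), so the transport term $e^\ell[D_h e^\ell]\cdot g_q$ is controlled by Young plus Gr\"onwall; second, that $g_q$ is $(\beta-1)$-H\"older, so the cross term can be bounded via $|p-\tilde p|^{2(\beta-1)}$ and then H\"older-interpolated against the $o(1)$ quantity from Step~1. Both approaches rely on the uniform bound $\|D_h u^n\|_\infty\le c$ from Theorem~\ref{sec:exist-discr-probl-1}. Your argument is more elementary and quantitative; the paper's compactness route is heavier but illustrates a technique (limit identification via monotonicity of $\Phi$) that becomes essential in the local-operator setting of \S\ref{sec:study-conv-case-1}, where the Lipschitz bound on $u_h$ is unavailable.
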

\begin{proof}
  \paragraph{Step 1} We start from (\ref{eq:45}) where $a$ and $b$ are the same consistency errors (with the same bounds) as in the previous paragraph;  using (\ref{eq:31}), this implies that
  \begin{equation}
    \label{eq:52}
      2^{\beta -3} \beta(\beta-1)        \dt \, h^2 \sum_{\ell=0}^{N_T-1}  \sum_{i,j}   \wtM^\ell_{i,j}     1_{ \{p ^{\ell+1}_{i,j} +\tilde p ^{\ell+1}_{i,j}\not=0\} }
 \min \left(| p ^{\ell+1}_{i,j}|_{\infty}^{\beta-2},| \tilde p ^{\ell+1}_{i,j}|_{\infty}^{\beta-2}  \right)
 |p ^{\ell+1}_{i,j}-\tilde p ^{\ell+1}_{i,j}|^2 =o(1).
  \end{equation}
where  $ p^{\ell+1}_{i,j},\tilde p^{\ell+1}_{i,j} \in (\R_+)^4$ are given by~(\ref{eq:36}).
From the a priori estimates on $\left \| [Du^{\ell+1}] \right \|_{\infty}$ and $\| [D\tilde u^{\ell+1}] \|_{\infty}$, the inequalities  $1<\beta <2$ and the bound from below on $m$,
 there exists a positive constant $c$  such that
\[   \wtM^\ell_{i,j}  \min \left(| p ^{\ell+1}_{i,j}|_{\infty}^{\beta-2},| \tilde p ^{\ell+1}_{i,j}|_{\infty}^{\beta-2}  \right) \ge c.\]
Hence
\begin{equation}
  \label{eq:53}
 \dt \, h^2 \sum_{\ell=0}^{N_T-1}  \sum_{i,j}   |p ^{\ell+1}_{i,j}-\tilde p ^{\ell+1}_{i,j}|^2 =o(1).
\end{equation}
 It is easy to see from the periodicity that (\ref{eq:53}) implies
 \begin{equation}
   \label{eq:54}
 \dt \, h^2 \sum_{\ell=0}^{N_T-1}  \sum_{i,j}  \left| [D_h u^{\ell+1}]_{i,j}- [D_h \tilde u^{\ell+1}]_{i,j}\right |^2 =o(1).
 \end{equation}
\paragraph{Step 2: a priori estimate on $(m^n_{i,j})$}
Multiplying the  second equation in (\ref{eq:21}) by $m^n_{i,j}$, summing with respect to the indices  $i$ and $j$ and $n$, we get that
\begin{equation}  \label{eq:55}
  \sum_{n=0}^{N_T-1}  \sum_{i,j}    m^{n}_{i,j} \frac {m^{n+1}_{i,j}- m^{n}_{i,j}} {\dt} -\nu  \sum_{n=0}^{N_T-1}  \|\nabla _h m^{n}\|^2_2
  -   \sum_{n=0}^{N_T-1}  \sum_{i,j}    m^{n}_{i,j}  g_q(x_{i,j}, [D_h u^{n+1}]_{i,j}) \cdot  [D_h m^{n}]_{i,j} =0.
\end{equation}
In a very classical way, using the continuity of $g_q$ and  the a priori bound on $\| D_h u^{n+1}\|_{L^{\infty}(\T_h^2)}$ leads to the existence of a positive constant $C$ such that
\begin{equation}\label{eq:56}
  \max_n \|m^n\|^2_{2} +  \dt \sum_{n=0}^{N_T-1} \left \| [D_h m^n] \right \|_2^2 \le C \|m^{N_T}\|_2^2.
\end{equation}
\begin{remark}\label{sec:step-2:-priori}
From~(\ref{eq:21}), we see that for all grid function $(r_{i,j})$ on $\T_h$,
\begin{equation}\label{eq:59}
  \begin{split}
  &\sum_{n=0}^{N_T-1} \sum_{i,j}    r^{n}_{i,j} \frac {m^{n+1}_{i,j}- m^{n}_{i,j}} {\dt} \\ = & \nu  \sum_{n=0}^{N_T-1}  \sum_{i,j}  (\nabla_h m^{n})_{i,j}\cdot  (\nabla_h r^n)_{i,j}
  +   \sum_{n=0}^{N_T-1}  \sum_{i,j}    m^{n}_{i,j}  g_q(x_{i,j}, [D_h u^{n+1}]_{i,j}) \cdot  [D_h r^n]_{i,j}.
  \end{split}
\end{equation}
From the  a priori bound on $\| D_h u^{n+1}\|_{L^{\infty}(\T_h^2)}$ and (\ref{eq:56}), we infer that
\begin{displaymath}
  \sup_{(r^n)_{n}} \frac {   \sum_{n=0}^{N_T-1} \sum_{i,j}    r^{n}_{i,j} \frac {m^{n+1}_{i,j}- m^{n}_{i,j}} {\dt}  }{  \sqrt{\sum_{n=0}^{N_T-1}  \sum_{i,j} | [D_h r^n]_{i,j}|^2  }}\le C.
\end{displaymath}
\end{remark}

\paragraph{Step 3}
From (\ref{eq:56}), we see  that the family of functions $(m_h)$ is bounded in $L^2(0,T; H^1(\T^2))$. Moreover, from (\ref{eq:59}) and the  a priori bound on $\| D_h u^{n+1}\|_{L^{\infty}(\T_h^2)}$,
 we can use the same arguments as in e.g. \cite{MR1804748} pages 855-858 and prove that the family of functions $(m_h)$ has the following property: there exists a constant $C$ such that
 \begin{equation}
   \label{eq:60}
  \|m_h( \cdot-\tau,\cdot) - m_h(\cdot, \cdot)\|^2_{L^2( (\tau,T)\times \T^2)} \le C \tau     h^2 \dt \sum_{n=0}^{N_T-1} \left( \sum_{i,j} | [D_h m^n]_{i,j}|^2  + \sum_{i,j} (m^n_{i,j})^2\right),
 \end{equation}
for all $\tau\in (0,T)$.
Since the right hand side is bounded by $C\tau   h^2 \|m^{N_T}\|_2^2$,  Kolmogorov's theorem (see e.g. \cite{MR697382}, \cite{MR1652593}, \cite{MR1804748} page 833) implies that the family of  functions $(m_h)$
is relatively compact in $L^2( (0,T)\times \T^2)$:
we can extract a subsequence  of parameters $h$ and $\dt$ tending to $0$ such that $ m_h$ converges to $\bar m$ strongly in $L^2( (0,T)\times \T^2)$, and (\ref{eq:60}) holds for $\bar m$.
\\
Therefore, from ($\Phi_{h4}$),
\begin{displaymath}
 \lim_{h,\dt\to 0} \dt \sum_{n=0}^{N_T} \|\Phi[\bar m (t_n,\cdot)] - \Phi_h[m^n]\|^2_{L^\infty(\T^2_h)} =0.
\end{displaymath}
On the other hand, from (\ref{eq:45}), we see that
\[\ds  \dt \, h^2  \sum_{n=0}^{N_T-1} \left(\Phi_h[m^{n}]-\Phi_h[\wtM^{n}],m^{n} -\wtM^{n} \right)_2 =o(1).\]
Then, using (\ref{eq:17}), we deduce from the previous two formulas that
\begin{displaymath}
 \lim_{\dt\to 0} \dt \sum_{n=0}^{N_T}   \int_{\T_h^2}  \left( \Phi[ m (t_n,\cdot)] (x)- \Phi[\bar m  (t_n,\cdot) ] (x)\right)   (m(t_n,x)-\bar m(t_n,x)) dx =0,
\end{displaymath}
which implies that
\begin{displaymath}
  \int_0^T  \int_{\T_h^2}  \left( \Phi[ m (t,\cdot)] (x)- \Phi[\bar m  (t,\cdot) ] (x)\right)   (m(t,x)-\bar m(t,x)) dxdt =0,
\end{displaymath}
The monotonicity of $\Phi$ then  implies that  $m=\bar m$. From the uniqueness of the limit $\bar m$,  we have proven that the whole
 family $m_h$ converges to $m$ in  $L^2((0,T)\times \T^2)$ as $h$ and $\dt$ tend to zero.
\\
We conclude as in Step 3 of the proof of Theorem \ref{sec:case-when-hx-4} for the convergence of $u_h$ to $u$ in $\cC^0[0,T]\times \T^2)$ and  in $ L^2(0,T; W^{1,2}(\T^2))$.
\end{proof}

\section{Study of the convergence in the case when $\Phi$ is a local  operator}
\label{sec:study-conv-case-1}

\subsection{A priori estimates for  (\ref{eq:21})-(\ref{eq:22}) with local operators $\Phi_h$}
\label{sec:stab-estim-refeq:16}
We have a result similar to Theorem 2.7 in \cite{MR2295621}:
\begin{lemma}
  \label{sec:stab-estim-refeq:19}
Assume that $0\le m_T(x)\le \bar m_T$ and that $ u_0$ is a continuous function.
 If $g$ is given by (\ref{eq:10})-(\ref{eq:11}), $(\Phi_h[m])_{i,j} = F( m_{i,j})$, where
\begin{description}
\item{($F_1$)}  $F$ is a $\cC^0$ function defined on $[0,\infty)$
\item{($F_2$)} there exist three constants $\delta>0$ and
  $\gamma>1$ and $C_1\ge 0$ such that
\begin{equation*}
mF(m)  \ge \delta |F(m) |^\gamma -C_1,\quad \forall m\ge 0,
\end{equation*}
\end{description}
 then there exists two constants  $c$ and $C>0$ such that
 \begin{itemize}
 \item  $ u_{i,j}^n\ge c$, for all $n$, $i$ and $j$
 \item
   \begin{equation}
     \label{eq:42}
     h^2\dt \sum_{n=1}^{N_T} \sum_{i,j}    \Bigl| [D_h u^n ]_{i,j}\Bigr|^\beta  +  h^2\dt \sum_{n=0}^{N_T-1} \sum_{i,j}  \left|F(m^n_{i,j})\right|^{\gamma} \le C
   \end{equation}
\item
  \begin{equation}
    \label{eq:43}
\max_{0\le n\le N_T}  h^2  \sum_{i,j}  |u^n_{i,j}|\le C
  \end{equation}
\item Finally, let us call $U^n$ the sum $ h^2 \sum_{i,j} u^n_{i,j}$ and $U_{h}$ the piecewise linear function  obtained by interpolating the values $U^n$ at the
points $(t_n)$: the family of functions $(U_{h} )$ is bounded in $W^{1,1}(0,T)$ by a constant independent of $h$ and $\dt$.
\end{itemize}
\end{lemma}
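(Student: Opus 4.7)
I would mirror the continuous Lasry--Lions argument (Theorem 2.7 of \cite{MR2295621}) and exploit the discrete duality encoded in the fundamental identity (\ref{eq:41}). A first useful observation is that $(F_2)$ hides a one-sided bound on $F$: if $F(m)<0$ for some $m\ge 0$, then $mF(m)\le 0$ and $(F_2)$ force $\delta|F(m)|^\gamma\le C_1$, whence $F(m)\ge F_{\min}:=-(C_1/\delta)^{1/\gamma}$ on all of $[0,\infty)$. This is what will drive the pointwise lower bound on $u$.

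\textbf{Pointwise lower bound on $u$.} Set $M:=\|\ch\|_\infty-F_{\min}$ and $\underline u^n_{i,j}:=\min_{\T^2}u_0-Mn\dt$. By Lemma~\ref{sec:case-when-hx} with $\beta>1$ we have $G_p(0)=0$, hence $g(x,0)=\ch(x)$, and $\underline u$ satisfies $(\underline u^{n+1}-\underline u^n)/\dt-\nu\Delta_h\underline u^{n+1}+g(x,[D_h\underline u^{n+1}])=-M+\ch(x)\le F_{\min}\le F(m^n_{i,j})$. The discrete comparison principle, which holds thanks to $(g_1)$, then yields $u^n_{i,j}\ge c:=\min u_0-MT$ uniformly in $n,i,j$.

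\textbf{Fundamental identity and closing of the $|F|^\gamma$ estimate.} I would apply (\ref{eq:41}) with $\tilde u\equiv 0$ and $\tilde m\equiv 0$. Plugging into (\ref{eq:38}) gives $a^n_{i,j}=\ch(x_{i,j})-F(0)$ and $b^n\equiv 0$ (using also $g_q(x,0)=0$, hence $\cT_{i,j}(0,0)=0$). After multiplying by $h^2\dt$ and cancelling the $F(0)$ contributions, the identity reduces to
\begin{equation*}
h^2\dt\,\cG(m,u,0)+h^2\dt\sum_{n,i,j}m^n_{i,j}F(m^n_{i,j})=h^2(m^{N_T},u^{N_T})_2-h^2(m^0,u_0)_2+h^2\dt\sum_{n,i,j}\ch(x_{i,j})m^n_{i,j}.
\end{equation*}
By $(F_2)$, $h^2\dt\sum mF(m)\ge\delta h^2\dt\sum|F|^\gamma-C_1T$; $\cG(m,u,0)\ge 0$ by $(g_4)$; and the last two terms on the right are bounded in terms of $\|u_0\|_\infty,\|\ch\|_\infty,T$ (using $h^2\sum m^n=1$). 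To handle the boundary term, sum the Bellman equation in $(i,j)$: periodicity gives $h^2\sum\Delta_h u=0$, and $g\ge\ch\ge-\|\ch\|_\infty$ yields $U^{n+1}-U^n\le\dt\|\ch\|_\infty+\dt h^2\sum F(m^n)$, with $U^n:=h^2\sum u^n_{i,j}$. Young's inequality $|F|\le\eta|F|^\gamma+C_\eta$ ($\gamma>1$) and iteration then give $U^{N_T}\le C+\eta h^2\dt\sum|F|^\gamma$. Combined with $u^{N_T}\ge c$ and $m^{N_T}\le\bar m_T$,
\begin{equation*}
h^2(m^{N_T},u^{N_T})_2=h^2\sum m^{N_T}(u^{N_T}-c)+c\le\bar m_T(U^{N_T}-c)+c\le C+\bar m_T\eta h^2\dt\sum|F|^\gamma.
\end{equation*}
Inserting back gives $(\delta-\bar m_T\eta)h^2\dt\sum|F|^\gamma+h^2\dt\,\cG(m,u,0)\le C'$, and choosing $\eta=\delta/(2\bar m_T)$ delivers both the $|F|^\gamma$ part of (\ref{eq:42}) and a uniform bound on $h^2\dt\,\cG(m,u,0)$.

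\textbf{Remaining bounds and main obstacle.} For the $|[D_h u^n]|^\beta$ part of (\ref{eq:42}), I isolate $h^2\dt\sum|p^{n+1}_{i,j}|^\beta=U^0-U^{N_T}-Th^2\sum\ch+h^2\dt\sum F(m^n)$ (with $p$ as in (\ref{eq:36})) from the summed Bellman equation; the right-hand side is controlled by the preceding estimates, and a periodic reindexing based on $a^2=(a^+)^2+(a^-)^2$ yields $\sum_{i,j}|[D_h u^n]_{i,j}|^\beta\le C_\beta\sum_{i,j}|p^n_{i,j}|^\beta$, finishing (\ref{eq:42}). The $L^1$ bound (\ref{eq:43}) follows from the pointwise lower bound: $h^2\sum|u^n|\le U^n+2|c|$, both controlled. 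For the $W^{1,1}(0,T)$ bound on $U_h$, the increment estimate $|U^{n+1}-U^n|\le\dt(\|\ch\|_\infty+h^2\sum|p^{n+1}|^\beta+h^2\sum|F(m^n)|)$ telescopes to a bounded total variation, and together with $\|U_h\|_{L^\infty}\le C$ gives the claim. The main obstacle is the circular coupling: bounding $\sum|F|^\gamma$ through the identity requires control of $h^2(m^{N_T},u^{N_T})_2$, which in turn needs both the pointwise lower bound on $u$ (via the hidden lower bound $F\ge F_{\min}$ extracted from $(F_2)$) and the upper bound on $U^{N_T}$ with a coefficient small enough for Young's inequality to permit absorption by $\delta$.
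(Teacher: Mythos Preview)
Your argument is correct, but it diverges from the paper's in the choice of comparison functions fed into the fundamental identity~(\ref{eq:41}). The paper takes $\tilde m^n_{i,j}\equiv \bar m_T$ and $\tilde u^n_{i,j}=n\dt\,F(\bar m_T)$. This choice buys two things at once: (i) since $m^{N_T}\le \bar m_T$, the boundary term $h^2(m^{N_T}-\bar m_T,\,u^{N_T}-TF(\bar m_T))_2$ has a sign and is bounded immediately from the pointwise lower bound on $u$, with no absorption argument needed; and (ii) because the weight $\tilde m\equiv\bar m_T>0$ is strictly positive, $\cG(\tilde m,0,u)$ already dominates $\sum|[D_hu^n]|^\beta$ via Lemma~\ref{sec:basic-facts-numer} and Remark~\ref{sec:basic-identity-}, so the gradient bound falls out of the same identity as the $|F|^\gamma$ bound. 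Your choice $\tilde u=\tilde m=0$ keeps the algebra simpler but forfeits both shortcuts: the boundary term $h^2(m^{N_T},u^{N_T})_2$ must be closed by bounding $U^{N_T}$ through the summed Bellman equation and then absorbing a small multiple of $\sum|F|^\gamma$ via Young's inequality; and since $\cG(0,0,u)=0$ carries no information, the gradient bound has to be recovered in a separate step from the exact summed Bellman identity and the reindexing $\sum_{i,j}|[D_hu^n]_{i,j}|^\beta\le C_\beta\sum_{i,j}|p^n_{i,j}|^\beta$. Your detour is sound---the absorption closes because $\eta$ can be made small against $\delta$, and the componentwise periodic reindexing is exactly the mechanism behind Lemma~\ref{sec:basic-facts-numer}---so the end result is the same; the paper's route is shorter and avoids the circular coupling you flag as the main obstacle.
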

\begin{proof}

From the two assumptions on $F$, we deduce that $\underline F\equiv \inf_{m\in \R_+} F(m)$  is a real number and that $\underline{F}=  \min_{m\ge 0} F(m)$. Note that $\underline F= F(0)$ if $F$ is nondecreasing.\\
A standard comparison argument shows that
\begin{displaymath}
  u_{i,j}^n \ge \min_{x\in \T^2} u^0(x) + \left(\underline{F}- \max_{x\in \T^2} \ch(x)\right)  t_n   \ge    \min_{x\in \T^2} u^0(x) - T \left(\underline{F}- \max_{x\in \T^2} \ch(x)\right)^- ,
\end{displaymath}
so $ u_{i,j}^n$ is bounded from below by a constant independent of $h$ and $\dt$.
\\
Consider $\tilde u_{i,j}^n= n \dt F(\bar m_T)$ and $\tilde m_{i,j}^n= \bar m_T$ for all $i,j,n$.
We have \begin{displaymath}
\left\{
  \begin{array}[c]{llr}
\frac {\tilde u^{n+1}_{i,j}- \tilde u^{n}_{i,j}} {\dt}  -\nu (\Delta_h \tilde u^{n+1})_{i,j}
 + g(x_{i,j}, [D_h \tilde u^{n+1}]_{i,j} )&=F(\bar m_T) + \ch(x_{i,j}) , \\
\frac {\tilde m^{n+1}_{i,j}- \tilde m^{n}_{i,j}} {\dt} +\nu (\Delta_h \tilde m^{n})_{i,j}
+\cT_{i,j}(\tilde u^{n+1},\tilde m^n)&=0.
\end{array}\right.
\end{displaymath}
Identity (\ref{eq:41}) becomes
 \begin{equation}
\label{eq:44}
  \begin{split}
& h^2\dt \cG(m, u,\tilde u) + h^2\dt \cG(\tilde m ,\tilde u,u) + h^2\dt  \sum_{n=0}^{N_T-1}
   \sum_{i,j}     (F(m_{i,j}^n) -F (\bar m_T)   (m_{i,j}^n-\bar m_T ) \\
= & h^2 \dt \sum_{n=0}^{N_T-1}  \sum_{i,j}    \ch(x_{i,j})  (m_{i,j}^n-\tilde m_{i,j}^n ) +
  h^2(m^{N_T} -\bar m_T, u^{N_T}  - T\, F(\bar m_T) )_2 -    h^2(m^{0} -  \bar m_T, u^{0} )_2.
  \end{split}\end{equation}
Note that
\begin{displaymath}
  \cG(m, u,\tilde u) = \cG(m, u,0)  \quad  \hbox{and}\quad   \cG(\tilde m ,\tilde u,u)=\cG(\tilde m ,0,u),
\end{displaymath}
because $\tilde u_{i,j}^n$ does not depend on $i,j$.
On the other hand,
\begin{enumerate}
\item Since the function $\ch$ is bounded, and $m^n$ is a discrete probability density, there exists a constant $C$ such that
  \begin{displaymath}
  \left| h^2 \dt \sum_{n=0}^{N_T-1}  \sum_{i,j}    \ch(x_{i,j})  (m_{i,j}^n-\tilde m_{i,j}^n )\right| \le C  .
  \end{displaymath}
\item Since $m^{N_T}-\bar m_T$ is nonpositive with a bounded mass,  and since $u^n$ is bounded from below by a constant,  there exists a constant $C$ such that
  \begin{displaymath}
     h^2(m^{N_T} -\bar m_T, u^{N_T}  - T\, F(\bar m_T) )_2 \le C.
  \end{displaymath}
\item Since $u_0$ is continuous on $\T^2$ and $m_0$ is a discrete probability density,  there exists a constant $C$ such that
  \begin{displaymath}
    -    h^2(m^{0} -  \bar m_T, u^{0} )_2 \le C.
  \end{displaymath}
\item Finally, we know that
  \begin{displaymath}
  (F(m_{i,j}^n)-F(\bar m_T)) (m_{i,j}^n-\bar m_T) \ge  \delta \left|F(m_{i,j}^n\right |^\gamma -C_1
- \bar m_T  F(m_{i,j}^n) -m_{i,j}^n F(\bar m_T) +\bar m_TF(\bar m_T).
\end{displaymath}
Moreover, since $\gamma>1$, there exists two constants  $c=  \frac {\delta} 2$ and $C$  such that
 $\delta \left| F(m_{i,j}^n)\right|^\gamma - \bar m_T  F(m_{i,j}^n) \ge  c  \left|F(m_{i,j}^n) \right|^\gamma - C$.
Since $m^n\in \cK_h$, summing yields that for a possibly different constant $C$,
\begin{displaymath}
  h^2 \sum_{i,j}  (F(m_{i,j}^n)-F(\bar m_T)) (m_{i,j}^n-\bar m_T) \ge  c h^2 \sum_{i,j}   \left|F(m_{i,j}^n)\right|^\gamma -C.
\end{displaymath}
\end{enumerate}
In the case $\beta\ge 2$, we get (\ref{eq:42})  from (\ref{eq:44}), from  (\ref{eq:37})  and from the four points above.
In the case $1<\beta<2$, we get (\ref{eq:42})  from (\ref{eq:44}),  from Remark \ref{sec:basic-identity-}   and from the four points above.\\
Finally, summing the first equation in (\ref{eq:21}) for all $i,j$, $0 \le \ell <n$  one gets that
\begin{displaymath}
  h^2\sum_{i,j} u^{n}_{i,j}  + h^2\dt \sum_{\ell=0}^{q-1}  \sum_{i,j}  g(x_{i,j}, [D_h u^{\ell+1}]_{i,j} )=  h^2 \dt \sum_{\ell=0}^{n-1}  \sum_{i,j} F( m^{\ell} _{i,j}) +  h^2\sum_{i,j} u^{0}_{i,j} .
\end{displaymath}
Using (\ref{eq:42}), we get that there exists a constant $C$ such that
\begin{displaymath}
  h^2\sum_{i,j} u^{n}_{i,j}  \le C,
\end{displaymath}
and since $ u^{n}_{i,j}$ is bounded from below by a  constant, we get (\ref{eq:43}).\\
Finally, remember that $U^n$ is the sum $ h^2 \sum_{i,j} u^n_{i,j}$; summing the first equations in (\ref{eq:21}) for all $i,j$, we obtain that
\begin{displaymath}
  \frac {U^{n+1}- U^n} \dt =    G^{n+1} \equiv -h^2 \sum_{i,j} \left(g(x_{i,j}, [D_h u^{n+1}]_{i,j} ) +F (m^{n}_{i,j})\right).
\end{displaymath}
The a priori estimate (\ref{eq:42}) implies that $\dt \sum_{n=0}^{N_T=1} |G^{n+1}|$ is bounded by a constant.
 This implies that the piecewise linear function $U_h$ obtained by interpolating the values $U^n$ at the
points $(t_n)$ is bounded in $W^{1,1}(0,T)$ by a constant independent of $h$ and $\dt$.
\end{proof}


\subsection{Convergence theorems}\label{sec:convergence-theorems}
The case when $\Phi$ is a local operator, i.e. $\Phi[m](x)=F(m(x))$ brings additional difficulties, because there is no a priori Lipschitz estimates on $u_h$: such estimates were
 used several times in the proofs of Theorems \ref{sec:case-when-hx-4} and \ref{sec:case-when-1beta}. \\
For simplicity, we are going to make the assumption that the continuous problem has a classical solution:
existence of a classical solution can  be true for local operators $\Phi$: for example, it has been proved in \cite{porretta2012} that if $\beta=2$,
and $F$ is $\cC^1$ and bounded from below, and if the functions $u^0$ and $m_T$ are $\cC^2$ then there is a classical solution.
\begin{remark}
  In the case of the stationary problem  (\ref{eq:4}), it can be proved that, if ($F_2$) holds with $\gamma>2$ ($2$ is the space dimension) and $F$ is nondecreasing,
 then  (\ref{eq:4}) has a classical solution  for any $\beta>1$, by using the weak Bernstein method studied in \cite{MR833413}.
\end{remark}

\paragraph{Standing assumptions (in \S~\ref{sec:convergence-theorems})}
\begin{itemize}
\item We take $\nu>0$
\item  The Hamiltonian is of the form  (\ref{eq:7}) and the function $x\to \ch(x)$ is $\cC^1$ on $\T^2$
\item  the functions $u^0$ and $m_T$ are smooth, and  $m_T\in \cK$ is bounded from below by a positive number
\item  We assume that  ($F_1$) and ($F_2$) hold  that   there exists three positive constants $\underline \delta$, $ \eta_1>0$ and $0<\eta_2<1$ such that
  $F'(m)\ge\underline{\delta} \min (m^{\eta_1}, m^{-\eta_2})$
\item We consider a numerical Hamiltonian given by (\ref{eq:10})-(\ref{eq:11})
\end{itemize}

\begin{theorem}\label{sec:study-conv-case-2}
We make the standing assumptions stated above and  we assume furthermore that there is  unique classical solution $(u,m)$ of (\ref{eq:1})-(\ref{eq:3}) such that $m>0$.

Let $u_h$ (resp. $m_h$) be the   piecewise trilinear function  in $\cC([0,T]\times\T^2)$ obtained by interpolating the values $u_{i,j}^n$ (resp $m_{i,j}^n$)  at the nodes of the space-time grid.
The functions $u_h$ converge  in   $L^\beta(0,T; W^{1,\beta} (\T^2))$  to $u$ as $h$ and $\dt$ tend to $0$. The functions $m_h$ converge to $m$
in $L^{2-\eta_2}((0,T)\times \T^2)$ as $h$ and $\dt$ tend to $0$.
 \end{theorem}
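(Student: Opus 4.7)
I would adapt the strategy of Theorems~\ref{sec:case-when-hx-4} and~\ref{sec:case-when-1beta}, the essential difference being the lack of a uniform Lipschitz bound on $u_h$ and the substitution of the a priori estimates of Lemma~\ref{sec:stab-estim-refeq:19} for that bound. Setting $\tilde u^n_{i,j}=u(t_n,x_{i,j})$ and $\tilde m^n=\cI_h m(t_n,\cdot)$, smoothness of the classical solution ensures that $(\tilde u,\tilde m)$ satisfies the perturbed system~(\ref{eq:38}) with consistency errors $a^n,b^n$ whose $L^\infty(\T_h^2)$ norms tend to zero. Because the initial and terminal data are matched, the fundamental identity~(\ref{eq:41}) reduces, after multiplication by $h^2\dt$, to
\begin{equation*}
h^2\dt\,\cG(m,u,\tilde u)+h^2\dt\,\cG(\tilde m,\tilde u,u)+h^2\dt\sum_{n=0}^{N_T-1}(F(m^n)-F(\tilde m^n),m^n-\tilde m^n)_2 = R_h,
\end{equation*}
where $R_h$ is the consistency remainder on the right-hand side of~(\ref{eq:41}).

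The $a$-contribution in $R_h$ is bounded by $2T\max_n\|a^n\|_\infty$ using $m^n,\tilde m^n\in\cK_h$, exactly as in Theorem~\ref{sec:case-when-hx-4}. The $b$-contribution is the novel point: lacking a Lipschitz estimate, I invoke the $L^1$-type bound $h^2\sum_{i,j}|u^n_{i,j}|\le C$ from~(\ref{eq:43}) together with $\|\tilde u\|_\infty\le C$ (smoothness), so that $h^2|(b^{n-1},u^n-\tilde u^n)_2|\le C\|b^{n-1}\|_\infty$ and the summed contribution is $o(1)$. Hence $R_h=o(1)$, and since each of the three terms on the left is non-negative by~($g_4$) and the monotonicity of $F$, all three are $o(1)$.

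From $h^2\dt\,\cG(\tilde m,\tilde u,u)=o(1)$ together with the uniform lower bound $\tilde m^n\ge\underline m>0$ inherited from $m>0$, Lemma~\ref{sec:basic-facts-numer} (case $\beta\ge 2$) yields directly $h^2\dt\sum_{n,i,j}|[D_h u^n]_{i,j}-[D_h\tilde u^n]_{i,j}|^\beta=o(1)$. For $1<\beta<2$ I would apply inequality~(\ref{eq:31}) to the integrand of $\cG(\tilde m,\tilde u,u)$, splitting indices according to whether $|\tilde p^n_{i,j}|_\infty\le 2|p^n_{i,j}|_\infty$, where $p,\tilde p$ are built from $[D_h\tilde u^n]$ and $[D_h u^n]$ via~(\ref{eq:26}): on the first set the minimum in~(\ref{eq:31}) is bounded below by $c\|\nabla u\|_\infty^{\beta-2}$, producing $\ell^2$ control on $|p-\tilde p|$, upgraded to $\ell^\beta$ by Hölder in the counting measure with exponents $2/\beta$ and $2/(2-\beta)$; on the second set $|p-\tilde p|$ is comparable to $|\tilde p|$ and~(\ref{eq:31}) directly gives $\ell^\beta$ control. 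Assembling both estimates and using the fact that every one-sided discrete derivative appears in the $p$--$\tilde p$ sum (as in the closing step of Lemma~\ref{sec:basic-facts-numer}), I obtain $h^2\dt\sum_{n,i,j}|[D_h u^n]_{i,j}-[D_h\tilde u^n]_{i,j}|^\beta=o(1)$. Combined with $\tilde u_h\to u$ in $L^\beta(0,T;W^{1,\beta}(\T^2))$ by smoothness, this proves the convergence of $u_h$.

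For $m_h$ the key point is to extract $L^{2-\eta_2}$ convergence from the doubly degenerate monotonicity. Setting $M=\|m\|_\infty$, $m_*=\tfrac12\inf m>0$, I would establish the two pointwise estimates: for $a\in[0,R]$ and $b\in[m_*,M]$ (with any fixed $R>2M$), the assumption $F'(s)\ge\underline\delta\min(s^{\eta_1},s^{-\eta_2})$ implies that $F'$ is bounded below by a positive constant on the relevant range, hence $(F(a)-F(b))(a-b)\ge c(a-b)^2$; for $a>R$, integrating $F'(s)\ge\underline\delta s^{-\eta_2}$ on $[b,a]$ and using $a-b\sim a$ yields $(F(a)-F(b))(a-b)\ge c\,a^{2-\eta_2}\ge c'(a-b)^{2-\eta_2}$. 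Applying these with $a=m^n_{i,j}$, $b=\tilde m^n_{i,j}$ and using the $o(1)$ bound on the monotonicity term, one obtains simultaneously $h^2\dt\sum_{m^n\le R}(m^n-\tilde m^n)^2=o(1)$ and $h^2\dt\sum_{m^n>R}(m^n-\tilde m^n)^{2-\eta_2}=o(1)$. Hölder in the counting measure with exponents $2/(2-\eta_2)$ and $2/\eta_2$ upgrades the $L^2$ estimate on the bounded part into an $L^{2-\eta_2}$ estimate, and combining with $\tilde m_h\to m$ in $L^{2-\eta_2}$ yields $m_h\to m$ in $L^{2-\eta_2}((0,T)\times\T^2)$. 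The main obstacle is twofold: handling the $b$-duality term in the fundamental identity without a Lipschitz bound on $u_h$, and closing the $L^{2-\eta_2}$ estimate despite the degeneracy of $F'$ at both $0$ and $\infty$; both are overcome by the a priori bounds of Lemma~\ref{sec:stab-estim-refeq:19} together with the case analysis above.
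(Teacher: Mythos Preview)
Your treatment of the fundamental identity, the consistency remainder (using the $L^1$ bound~(\ref{eq:43}) in place of a Lipschitz bound), the case analysis for $1<\beta<2$, and the extraction of $L^{2-\eta_2}$ convergence of $m_h$ are all essentially correct and match the paper's Steps~1--2. (Your sentence ``$F'$ is bounded below by a positive constant on the relevant range'' is not literally true when $a$ is near $0$, since $F'(s)\sim s^{\eta_1}$ there; but the conclusion $(F(a)-F(b))(a-b)\ge c(a-b)^2$ for $a\in[0,R]$, $b\in[m_*,M]$ does hold once one argues via the integral $\int_a^b F'$, as the paper does with its $S^n_1$ term.)

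The genuine gap is in your last inference about $u_h$. From $h^2\dt\sum_{n,i,j}\bigl|[D_hu^n]_{i,j}-[D_h\tilde u^n]_{i,j}\bigr|^\beta=o(1)$ you only obtain convergence of the \emph{discrete gradients}; this controls $u_h-\tilde u_h$ only up to an additive function $\psi(t)$ of time alone (Poincar\'e--Wirtinger on $\T^2$). Nothing you have written rules out a nontrivial $\psi$, so the claim ``this proves the convergence of $u_h$'' is unjustified. In the nonlocal setting (Theorems~\ref{sec:case-when-hx-4}--\ref{sec:case-when-1beta}) the paper closes this gap via uniform convergence of $u_h$ obtained from a Barles--Souganidis argument, which in turn uses the Lipschitz bound on $u_h$ and $(\Phi_{h4})$; neither is available here.

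The paper's Step~3 supplies the missing argument, and it is not short: from the a.e.\ convergence $m_h\to m$ and the uniform integrability of $F(m_h)$ coming from the $L^\gamma$ bound in~(\ref{eq:42}), Vitali's theorem gives $F(m_h)\to F(m)$ in $L^1$; passing to the limit in the discrete Bellman equation then forces $\partial_t\psi=0$, so $\psi$ is a constant; finally, one decomposes $\partial_t u_h$ into a Laplacian part and a lower-order part, shows convergence of $\partial_t u_h$ in $L^1\bigl(0,T;(W^{s,\beta'}(\T^2))'\bigr)$ for $s$ large, and concludes $u_h\to u$ in $\cC^0\bigl([0,T];(W^{s,\beta'})'\bigr)$, whence $\psi=0$ by matching at $t=0$. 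You need to add this step (or an equivalent device for pinning down the spatial mean of $u_h(t,\cdot)$) to complete the proof.
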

 \begin{proof}
Call $\bar m= \max m(t,x)$ and $0<\underline m =\min  m(t,x)$.
   \paragraph{Step 1}
We start from (\ref{eq:45}) where $a$ and $b$
 are the same consistency errors (with the same bounds) as in  \S~\ref{sec:study-conv-case}.
 From Lemma \ref{sec:stab-estim-refeq:19}, the a priori bound (\ref{eq:43}) holds for $u_h$. This implies that  $ \lim_{h, \dt\to 0}   h^2  \max_n |( b^{n-1}, u^n-\tilde u^n )_2| =0$.
 From the fact that $m^n\in \cK_h$, we also get that $  \lim_{h, \dt\to 0}   h^2  \max_n| ( a^n, m^n-\tilde m^n )_2|=0$.
\\
Therefore, if $\beta\ge 2$, we obtain (\ref{eq:46}) and (\ref{eq:47}).
\\
If $1<\beta<2$,   we are going to prove that (\ref{eq:47}) also holds: we have
\begin{displaymath}
      2^{\beta -3} \beta(\beta-1)        \dt \, h^2 \sum_{\ell=0}^{N_T-1}  \sum_{i,j}   \wtM^\ell_{i,j}     1_{ p ^{\ell+1}_{i,j} +\tilde p ^{\ell+1}_{i,j}\not=0 }
 \min \left(| p ^{\ell+1}_{i,j}|_{\infty}^{\beta-2},| \tilde p ^{\ell+1}_{i,j}|_{\infty}^{\beta-2}  \right)
 |p ^{\ell+1}_{i,j}-\tilde p ^{\ell+1}_{i,j}|^2 =o(1).
\end{displaymath}
where  $ p^{\ell+1}_{i,j},\tilde p^{\ell+1}_{i,j} \in (\R_+)^4$ are given by~(\ref{eq:36}).
Let us define for brevity  \[ \ds  V ^{\ell+1}_{i,j} =    2^{\beta -3} \beta(\beta-1)   1_{ p ^{\ell+1}_{i,j} +\tilde p ^{\ell+1}_{i,j}\not=0 }
 \min \left(| p ^{\ell+1}_{i,j}|_{\infty}^{\beta-2},| \tilde p ^{\ell+1}_{i,j}|_{\infty}^{\beta-2}  \right)
 |p ^{\ell+1}_{i,j}-\tilde p ^{\ell+1}_{i,j}|^2.\]
Assume that $\tilde p_{i,j}^{\ell+1} \not =0$. We have
\begin{displaymath}
  V ^{\ell+1}_{i,j} \ge    2^{\beta-3} \beta (\beta-1)  \min( |p^{\ell +1 }_{i,j} | ^{\beta-2},|\tilde p^{\ell +1 }_{i,j} |^{\beta-2}) |p^{\ell +1 }_{i,j} -\tilde p^{\ell +1 }_{i,j} |^2.
\end{displaymath}
  \begin{itemize}
  \item If $|p^{\ell +1 }_{i,j} |\le |\tilde p^{\ell +1 }_{i,j} |$, then
    \begin{displaymath}
        V ^{\ell+1}_{i,j} \ge   2^{\beta-3} \beta (\beta-1)  |\tilde p^{\ell +1 }_{i,j} |^{\beta-2}  |p^{\ell +1 }_{i,j} -\tilde p^{\ell +1 }_{i,j} |^2.
    \end{displaymath}
  \item  If $|\tilde p^{\ell +1 }_{i,j} |\le |p^{\ell +1 }_{i,j} |$ and $|p^{\ell +1 }_{i,j}  -\tilde p^{\ell +1 }_{i,j} |\le |\tilde p^{\ell +1 }_{i,j}  | /2$, then
 \begin{displaymath}
   \begin{split}
        V ^{\ell+1}_{i,j} & \ge   2^{\beta-3} \beta (\beta-1)  |p^{\ell +1 }_{i,j} |^{\beta-2}  |p^{\ell +1 }_{i,j} -\tilde p^{\ell +1 }_{i,j} |^2\\
& \ge  2^{\beta-3} \beta (\beta-1) (3 |\tilde p^{\ell +1 }_{i,j} |/2)^{\beta-2}  |p^{\ell +1 }_{i,j} -\tilde p^{\ell +1 }_{i,j} |^2 \\
&=   \frac { 3^{\beta -2} } 2  \beta (\beta-1) |\tilde p^{\ell +1 }_{i,j} |^{\beta-2}  |p^{\ell +1 }_{i,j} -\tilde p^{\ell +1 }_{i,j} |^2.
   \end{split}
    \end{displaymath}
\item  If $|\tilde p^{\ell +1 }_{i,j} |\le |p^{\ell +1 }_{i,j} |$ and $|p^{\ell +1 }_{i,j}  -\tilde p^{\ell +1 }_{i,j} |\ge |\tilde p^{\ell +1 }_{i,j}  | /2$, then
 \begin{displaymath}
   \begin{split}
        V ^{\ell+1}_{i,j} & \ge   2^{\beta-3} \beta (\beta-1)  |p^{\ell +1 }_{i,j} |^{\beta-2}  |p^{\ell +1 }_{i,j} -\tilde p^{\ell +1 }_{i,j} |^2\\
& \ge  2^{\beta-3} \beta (\beta-1) ( |p^{\ell +1 }_{i,j} -\tilde p^{\ell +1 }_{i,j} | + |\tilde p^{\ell +1 }_{i,j} |) ^{\beta-2}  |p^{\ell +1 }_{i,j} -\tilde p^{\ell +1 }_{i,j} |^2\\
&  \ge  2^{\beta-3} \beta (\beta-1) ( 3|p^{\ell +1 }_{i,j} -\tilde p^{\ell +1 }_{i,j} |)^{\beta-2}   |p^{\ell +1 }_{i,j} -\tilde p^{\ell +1 }_{i,j} |^2\\
&=   \frac { 6^{\beta -2} } 2  \beta (\beta-1)   |p^{\ell +1 }_{i,j} -\tilde p^{\ell +1 }_{i,j} |^\beta.
   \end{split}
    \end{displaymath}
  \end{itemize}
If $\tilde p^{\ell +1 }_{i,j} =0$ and $p^{\ell +1 }_{i,j} \not = 0$, then $  V ^{\ell+1}_{i,j} \ge   2^{\beta-3} \beta (\beta-1)  |p^{\ell +1 }_{i,j} |^{\beta}$.
\\
From the observation above, and since $  \wtM^\ell_{i,j} \ge \underline m$,
\begin{displaymath}
    \dt \, h^2 \sum_{\ell=0}^{N_T-1}  \sum_{i,j}    \left(
      \begin{array}[c]{l}
  \ds      1_{\{  |p^{\ell +1 }_{i,j} |\le |\tilde p^{\ell +1 }_{i,j} | \}} |\tilde p^{\ell +1 }_{i,j} |^{\beta-2}  |p^{\ell +1 }_{i,j} -\tilde p^{\ell +1 }_{i,j} |^2\\
\ds+        1_{\{  |p^{\ell +1 }_{i,j} |\ge |\tilde p^{\ell +1 }_{i,j} | \}}   1_{\{|p^{\ell +1 }_{i,j}  -\tilde p^{\ell +1 }_{i,j} |\le |\tilde p^{\ell +1 }_{i,j}  | /2\}}
   |\tilde p^{\ell +1 }_{i,j} |^{\beta-2}  |p^{\ell +1 }_{i,j} -\tilde p^{\ell +1 }_{i,j} |^2\\
+ \ds   1_{\{  |p^{\ell +1 }_{i,j} |\ge |\tilde p^{\ell +1 }_{i,j} | \}}   1_{\{|p^{\ell +1 }_{i,j}  -\tilde p^{\ell +1 }_{i,j} |\ge |\tilde p^{\ell +1 }_{i,j}  | /2\}} |p^{\ell +1 }_{i,j} -\tilde p^{\ell +1 }_{i,j} |^\beta.
      \end{array}\right) =o(1).
\end{displaymath}
Note also that from the regularity of $u$,  $ |\tilde p^{\ell +1 }_{i,j} |^{\beta-2}$ is bounded from below by a constant independent of $h$, $\dt$, $i,j,\ell$.
Thus
\begin{displaymath}
    \dt \, h^2 \sum_{\ell=0}^{N_T-1}  \sum_{i,j}    \left(
      \begin{array}[c]{l}
  \ds      1_{\{  |p^{\ell +1 }_{i,j} |\le |\tilde p^{\ell +1 }_{i,j} | \}}  |p^{\ell +1 }_{i,j} -\tilde p^{\ell +1 }_{i,j} |^2\\
\ds+        1_{\{  |p^{\ell +1 }_{i,j} |\ge |\tilde p^{\ell +1 }_{i,j} | \}}   1_{\{|p^{\ell +1 }_{i,j}  -\tilde p^{\ell +1 }_{i,j} |\le |\tilde p^{\ell +1 }_{i,j}  | /2\}}
    |p^{\ell +1 }_{i,j} -\tilde p^{\ell +1 }_{i,j} |^2\\
+ \ds   1_{\{  |p^{\ell +1 }_{i,j} |\ge |\tilde p^{\ell +1 }_{i,j} | \}}   1_{\{|p^{\ell +1 }_{i,j}  -\tilde p^{\ell +1 }_{i,j} |\ge |\tilde p^{\ell +1 }_{i,j}  | /2\}} |p^{\ell +1 }_{i,j} -\tilde p^{\ell +1 }_{i,j} |^\beta.
      \end{array}\right) =o(1).
\end{displaymath}
Using a  H{\"o}lder inequality, we deduce that
\begin{displaymath}
  h^2 \dt \sum_{\ell =1}^{N_T}    \sum_{i,j}     \Bigl| p^{\ell+1}_{i,j} - \tilde p^{\ell+1}_{i,j}\Bigr|^\beta =o(1),
\end{displaymath}
and finally (\ref{eq:47}).

 \paragraph{Step 2}
We also obtain from  (\ref{eq:45}) that
\begin{equation}
  \label{eq:57}
   h^2\dt  \sum_{n=0}^{N_T-1}   \sum_{i,j}     (F(m_{i,j}^n) -F (\tilde m_{i,j}^n )   (m_{i,j}^n-\tilde m_{i,j}^n ) =o(1).
\end{equation}
We split the sum w.r.t. $ (i,j)$ in the left hand side of (\ref{eq:57}) into
\begin{displaymath}
  \begin{split}
    S^n_1=  \sum_{i,j}  ( (m_{i,j}^n-\tilde m_{i,j}^n )^-)^2  \int_0^1 F'(\tilde m_{i,j}^n + t  (m_{i,j}^n-\tilde m_{i,j}^n ) )  dt,\\
    S^n_2= \sum_{n=0}^{N_T-1}   \sum_{i,j}  ( (m_{i,j}^n-\tilde m_{i,j}^n )^+)^2  \int_0^1 F'(\tilde m_{i,j}^n + t  (m_{i,j}^n-\tilde m_{i,j}^n ) )  dt .
  \end{split}
\end{displaymath}
Call $\bar m= \max m(t,x)$ and $\underline m =\min  m(t,x)>0$;
 there exists a positive number $c$ depending on $\underline m$ and $\bar m$ but independent of $h$ and $\delta$, and $(i,j, n)$ such that
\begin{displaymath}
  \begin{split}
    S^n_1&\ge   c \sum_{i,j}  ( (m_{i,j}^n-\tilde m_{i,j}^n )^-)^2  \int_0^1 (\tilde m_{i,j}^n + t  (m_{i,j}^n-\tilde m_{i,j}^n ) )^{\eta_1}  dt
\\ &= \frac c {\eta_1+1} \sum_{i,j}  ( (m_{i,j}^n-\tilde m_{i,j}^n )^-)    ((\tilde m_{i,j}^n) ^ {\eta_1+1} -(m_{i,j}^n )^{\eta_1+1})
\\ & \ge \frac c {\eta_1+1} \sum_{i,j}  ( (m_{i,j}^n-\tilde m_{i,j}^n )^-)^2  (\tilde m_{i,j}^n )^{\eta_1}.
\end{split}
\end{displaymath}
The latter inequality comes from the nondecreasing character of the function $\chi: [0,y]\mapsto \R$, $\chi(z)= \frac {y^{\eta_1+1} -z^{\eta_1+1}} {y-z}$. Thus,
$\chi(z)\ge \chi(0) =y^{\eta_1}$.
Hence, there exists a constant $c$ depending on the bounds on the density $m$ solution of (\ref{eq:1})-(\ref{eq:3})  but not on  $h$ and $\dt$, and $(i,j, n)$ such that
\begin{displaymath}
   S^n_1\ge  c \sum_{i,j}  ( (m_{i,j}^n-\tilde m_{i,j}^n )^-)^2 .
\end{displaymath}
On the other hand
    \begin{displaymath}
  \begin{split}
 S^n_2&\ge   c \sum_{i,j}  ( (m_{i,j}^n-\tilde m_{i,j}^n )^+)^2  \int_0^1 (\tilde m_{i,j}^n + t  (m_{i,j}^n-\tilde m_{i,j}^n ) )^{-\eta_2}  dt
\\ &= \frac c {1-\eta_2} \sum_{i,j}  ( (m_{i,j}^n-\tilde m_{i,j}^n )^+)    (( m_{i,j}^n) ^ {1-\eta_2} -(\tilde m_{i,j}^n )^{1-\eta_2})
\\& \ge  c \sum_{i,j}  ( (m_{i,j}^n-\tilde m_{i,j}^n )^+)^2    ( m_{i,j}^n) ^ {-\eta_2} .
  \end{split}
\end{displaymath}
But there exists a constant $c$ such that for all $ y\in [\underline m,\bar m]$: if  $z\ge y+1$
\begin{displaymath}
   (z-y)^2 z^{-\eta_2} \ge   (z-y)^{2-\eta_2} \inf_{z\ge y+1} \frac {(z-y)^{\eta_2}} {z^{\eta_2}} \ge c  (z-y)^{2-\eta_2},
\end{displaymath}
and if $y\le z\le y+1$,
\begin{displaymath}
   (z-y)^2 z^{-\eta_2}  \ge c  (z-y)^{2}.
\end{displaymath}
Therefore there exists a constant $c$ such that
\begin{displaymath}
  S_1^n+S_2^n \ge c\left( \sum_{i,j}  (m_{i,j}^n-\tilde m_{i,j}^n )^2 1_{\{ m_{i,j}^n\le \tilde m_{i,j}^n+1 \}}
+\sum_{i,j}  (m_{i,j}^n-\tilde m_{i,j}^n )^{ 2-\eta_2}  1_{\{ m_{i,j}^n\ge \tilde m_{i,j}^n+1 \}}  \right).
\end{displaymath}
Then (\ref{eq:57}) implies that
\begin{displaymath}
  \lim_{h,\dt\to 0} h^2 \dt \sum_{n=0}^{N_T-1} \left( \sum_{i,j}  (m_{i,j}^n-\tilde m_{i,j}^n )^2 1_{\{ m_{i,j}^n\le \tilde m_{i,j}^n+1 \}}
+\sum_{i,j}  (m_{i,j}^n-\tilde m_{i,j}^n )^{ 2-\eta_2}  1_{\{ m_{i,j}^n\ge \tilde m_{i,j}^n+1 \}}  \right) =0.
\end{displaymath}
Then, a H{\"o}lder inequality leads to
\begin{displaymath}
  \lim_{h,\dt\to 0} h^2 \dt \sum_{n=0}^{N_T-1}  \sum_{i,j}  |m_{i,j}^n-\tilde m_{i,j}^n|^{ 2-\eta_2} =0.
\end{displaymath}
 \paragraph{Step 3}
From the previous two steps, up to an extraction of a sequence,  $m_h\to m$ in $L^{2-\eta_2}((0,T)\times \T^2)$ and almost everywhere in $(0,T)\times \T^2$,
 $\nabla u_h$ converges to $\nabla u$ strongly in $L^\beta ((0,T)\times \T^2)$.  Moreover,
 from the last point in Lemma~\ref{sec:stab-estim-refeq:19}, the sequence of piecewise linear functions $(U_h)$ on $[0,T]$ obtained by interpolating the values
$U^n=h^2\sum_{i,j} u_{i,j}^n$ at the points $(t_n)$
 is bounded in $W^{1,1}(0,T)$, so up to a further extraction of a subsequence, it converges to some function $U$ in $L^\beta(0,T)$. As a result, there exists a
 function $\psi$ of the variable $t$ such that  $u_h\to u+\psi$ in $L^\beta(0,T; W^{1,\beta} (\T^2))$.
\\
From the a priori estimate (\ref{eq:42}), the sequence $(F(m_h))$ is bounded in $L^\gamma((0,T)\times \T^2)$ for some $\gamma>0$, which implies that it is uniformly integrable on
$(0,T)\times \T^2$. On the other hand, $F(m_h)$ converges almost everywhere to $F(m)$. Therefore, from Vitali's theorem, see e.g. \cite{0925.00005}
,  $F(m_h)$ converges  to $F(m)$ in $L^1((0,T)\times \T^2)$, (in fact, it is also possible to show that
$F(m_h)$ converges  to $F(m)$ in $L^q((0,T)\times \T^2)$ for all $q\in [1,\gamma)$).
\\
It is then possible to pass to the limit in the discrete Bellman equation, which yields  that $\frac {\partial \psi}{\partial t}=0$
in the sense of distributions in $(0,T)$. Hence $\psi$ is a constant.
\\
We are left with proving that $\psi$ is indeed $0$. For that, we split  $ \frac {\partial u_h}{\partial t}$ into the sum $ \mu_h +\eta_h$, where
\begin{itemize}
\item $\mu_h|_{t\in (t_n,t_{n+1}]}$ is constant w.r.t. $t$ and  piecewise linear w.r.t. $x$, and takes the value $\nu (\Delta _h u^{n+1})_{i,j}$ at the node $\xi_{i,j}$
\item $\eta_h$ is the remainder, see (\ref{eq:18}).  This term is constructed by interpolating  the values $F(m^n_{i,j})-g(x_{i,j}, [D_h u^{n+1}]_{i,j})$ at the grid nodes.
\end{itemize}
From the observations above,  $(\eta_h)$ converges in $L^1((0,T)\times \T^2)$, (because of the strong convergence of $\nabla u_h$ and of $F(m_h)$). On the other hand, from (\ref{eq:47}), it is not difficult to see that
$(\mu_h)$ is a Cauchy sequence in $L^\beta(0,T; (W^{s,\beta/(\beta-1)} (\T^2)  )')$ for $s$ large enough,
 (here $(W^{s,\beta/(\beta-1)} (\T^2)  )'$ is the topological dual of $W^{s,\beta/(\beta-1)} (\T^2)$).
Hence, $ ( \frac {\partial u_h}{\partial t}) $ converges in $L^1(0,T; (W^{s,\beta/(\beta-1)} (\T^2)  )')$. Therefore,
 $u_h$ converges in $\cC^0([0,T];  (W^{s,\beta/(\beta-1)} (\T^2)  )')$; since $(u_h(t=0))$ converges to $u_0$, we see that $\psi=0$.
\\
This implies that the extracted sequence $u_h$ converges to $u$ in $L^\beta(0,T; W^{1,\beta} (\T^2))$.
Since the limit is unique, the whole family $(u_h)$ converges to $u$ in  $L^\beta(0,T; W^{1,\beta} (\T^2))$ as $h$ and $\dt$ tend to $0$.
 \end{proof} \\
We give  the corresponding theorem in the ergodic case, without proof, because it is quite similar to that of Theorem~\ref{sec:study-conv-case-2}.
\begin{theorem}\label{sec:step-3}
We make the standing assumptions stated at the beginning of \S~\ref{sec:convergence-theorems} and  we assume furthermore
 that there is  unique classical solution $(u,m,\lambda)$ of (\ref{eq:4})-(\ref{eq:5}) such that $m>0$ and $\int_{\T^2} u(x) dx=0$.
\\
Let $u_h$ (resp. $m_h$) be the   piecewise bilinear function  in $\cC(\T^2)$ obtained by interpolating the values $u_{i,j}$ (resp $m_{i,j}$)  at the nodes of  $\T_h^2$,
where  $\left( (u_{i,j}), (m_{i,j}),\lambda_h \right) $ is the unique solution of the following system:\\
 for all $0\le i,j< N_h$, $m_{i,j}\ge 0$,
\begin{equation}
\label{eq:58}
\left\{
  \begin{array}[c]{llr}
  -\nu (\Delta_h u)_{i,j}  + g(x_{i,j}, [D_h u]_{i,j} ) +\lambda_h &=F(m_{i,j}), \\
-\nu (\Delta_h m)_{i,j} -\cT_{i,j}(u,m)&=0, \\
h^2\sum_{i,j} u_{i,j}=0, \quad h^2\sum_{i,j} m_{i,j}&=1.
\end{array}\right.
\end{equation}
As $h$  tends to $0$, the functions $u_h$ converge  in   $W^{1,\beta} (\T^2)$  to $u$, the functions $m_h$ converge to $m$
in $L^{2-\eta_2}(\T^2)$, and $\lambda_h$ tends to $\lambda$.
 \end{theorem}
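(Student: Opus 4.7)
The plan is to adapt the proof of Theorem~\ref{sec:study-conv-case-2} step by step to the stationary setting. Let $(u,m,\lambda)$ denote the unique classical solution of (\ref{eq:4})-(\ref{eq:5}) normalized by $\int_{\T^2} u=0$, and introduce the grid functions $\wtU_{i,j}=u(x_{i,j})$ and $\wtM_{i,j}=(\cI_h m)_{i,j}$. Plugging $(\wtU,\wtM,\lambda)$ into (\ref{eq:58}) produces consistency errors $a_{i,j}$ and $b_{i,j}$ whose $L^\infty$ norms tend to zero as $h\to 0$. I will first derive the stationary analog of the fundamental identity (\ref{eq:41}) by testing the first discrete equation against $m-\wtM$ and the second against $u-\wtU$, then summing. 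The $\lambda_h-\lambda$ contribution cancels because $h^2\sum_{i,j}(m_{i,j}-\wtM_{i,j})=0$, and the additive-constant ambiguity for $u$ is killed by the normalization $h^2\sum u_{i,j}=h^2\sum \wtU_{i,j}=0$. The resulting identity should read
\begin{equation*}
\cG(m,u,\wtU)+\cG(\wtM,\wtU,u)+h^2\bigl(F(m)-F(\wtM),\,m-\wtM\bigr)_2 = h^2(a,m-\wtM)_2+h^2(b,u-\wtU)_2.
\end{equation*}

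Next I will establish a stationary counterpart of Lemma~\ref{sec:stab-estim-refeq:19} via the comparison argument with a constant competitor ($\tilde u\equiv 0$, $\tilde m\equiv 1$): this should furnish the a priori bounds $h^2\sum|[D_h u]_{i,j}|^\beta + h^2\sum |F(m_{i,j})|^\gamma \le C$ and, together with the normalization $h^2\sum u_{i,j}=0$ and a discrete Poincar{\'e} inequality, also $h^2\sum|u_{i,j}|\le C$. With these bounds the right-hand side of the identity above is $o(1)$. Steps~1 and~2 of the proof of Theorem~\ref{sec:study-conv-case-2} then carry over essentially verbatim: Lemma~\ref{sec:basic-facts-numer} (when $\beta\ge 2$) or Remark~\ref{sec:basic-identity-} combined with the case analysis used in the range $1<\beta<2$ yields $h^2\sum|[D_h u]_{i,j}-[D_h \wtU]_{i,j}|^\beta=o(1)$, hence $\nabla u_h\to \nabla u$ in $L^\beta(\T^2)$; and the pointwise lower bound on $F'$ combined with H{\"o}lder's inequality yields $h^2\sum|m_{i,j}-\wtM_{i,j}|^{2-\eta_2}=o(1)$, hence $m_h\to m$ in $L^{2-\eta_2}(\T^2)$.

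To identify $\lambda_h\to\lambda$ I will average the first equation of (\ref{eq:58}) over $\T_h^2$, which yields
\begin{equation*}
\lambda_h = h^2\sum_{i,j}\bigl(F(m_{i,j})-g(x_{i,j},[D_h u]_{i,j})\bigr),
\end{equation*}
and pass to the limit using the strong convergence of $\nabla u_h$ in $L^\beta$ together with the uniform integrability of $F(m_h)$ inherited from the bound $h^2\sum|F(m_{i,j})|^\gamma\le C$, via Vitali's theorem as in the evolutive proof. Uniqueness of the classical solution, combined with the normalizations $h^2\sum u_{i,j}=0$ and $h^2\sum m_{i,j}=1$ which pin down the additive constant in $u$ and the mass of $m$, will upgrade subsequential convergence to convergence of the whole family.

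I expect the main obstacle to be the stationary counterpart of Lemma~\ref{sec:stab-estim-refeq:19}: the absence of a time variable removes the natural control provided by the terminal data $m^{N_T}$ and the initial data $u^0$, and their role must be transferred to the integral normalizations together with the strict lower bound on $\wtM$ inherited from $m>0$. Once this bookkeeping is in place, the remainder of the argument is essentially identical to the evolutive case.
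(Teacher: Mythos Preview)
Your proposal is correct and matches the paper's intended route: the paper explicitly omits the proof of this theorem, stating only that it is ``quite similar to that of Theorem~\ref{sec:study-conv-case-2}'', and your outline is precisely the stationary adaptation of that argument (fundamental identity with the $(\lambda_h-\lambda)$ term killed by the mass constraint, a priori bounds via the constant competitor $(\tilde u,\tilde m)=(0,1)$, then Steps~1--2 verbatim, and recovery of $\lambda_h\to\lambda$ by averaging the first equation and invoking Vitali). One cosmetic point: $h^2\sum_{i,j}\wtU_{i,j}=h^2\sum_{i,j}u(x_{i,j})$ is not exactly zero but only $o(1)$ (quadrature error for $\int_{\T^2}u=0$); this is harmless since the identity does not actually use that equality --- the $\lambda$-cancellation comes from $h^2\sum(m_{i,j}-\wtM_{i,j})=0$, and the bound on $h^2(b,u-\wtU)_2$ only needs $h^2\sum|u_{i,j}|+h^2\sum|\wtU_{i,j}|\le C$, which you already derive via discrete Poincar{\'e}.
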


\appendix
\section{Proofs of some technical lemmas}\label{sec:proofs-some-techn}

\paragraph{Proof of Lemma~\ref{sec:case-when-hx-1}}
For all $r\in \R^4$, we have
  \begin{displaymath}
    \begin{array}[c]{rcl}
\ds      g_q(x, q)  \cdot r&=& \ds G_p(p)\cdot(-1_{q_1<0} r_1, 1_{q_2>0} r_2,-1_{q_3<0} r_3, 1_{q_4>0} r_4)\\
      &=& \ds \beta |p|^{\beta-2}  \left( -p_1 r_1 + p_2 r_2- p_3 r_3+p_4 r_4 \right)
    \end{array}
  \end{displaymath}
Hence,
\begin{displaymath}
  \begin{array}[c]{rcl}
\ds    -g_q(x, q)\cdot (\tilde q -q ) &=& \ds -G_p(p)\cdot \Bigl(-1_{q_1<0} (\tilde q_1 -q_1 ), 1_{q_2>0}  (\tilde q_2 -q_2 ),-1_{q_3<0}  (\tilde q_3 -q_3 ), 1_{q_4>0}  (\tilde q_4 -q_4 )\Bigr)
\\
&=& \ds -\beta |p|^{\beta-2} \Bigl( -p_1 (\tilde q_1 -q_1 ) +p_2 (\tilde q_2 -q_2 ) -p_3  (\tilde q_3 -q_3 ) + p_4 (\tilde q_4 -q_4 )   \Bigr).
  \end{array}
\end{displaymath}
But
\begin{eqnarray*}
  -p_1 (\tilde q_1 -q_1 )= p_1(\tilde p_1 -p_1) - p_1 \tilde q_1^+ \le  p_1(\tilde p_1 -p_1),
\\
  p_2 (\tilde q_2 -q_2 )= p_2(\tilde p_2 -p_2) - p_2 \tilde q_2^-\le  p_2(\tilde p_2 -p_2),
\\
  -p_3 (\tilde q_3 -q_3 )= p_3(\tilde p_3 -p_3) - p_3 \tilde q_3^+\le  p_3(\tilde p_3 -p_3),
\\
  p_4 (\tilde q_4 -q_4 )= p_4(\tilde p_4 -p_4) - p_4 \tilde q_4^-\le  p_4(\tilde p_4 -p_4).
\end{eqnarray*}
Therefore
\begin{equation}\label{eq:28}
   -g_q(x, q)\cdot (\tilde q -q )\ge - \beta |p|^{\beta-2} p\cdot (\tilde p -p) = -G_p(p)\cdot (\tilde p -p),
\end{equation}
and (\ref{eq:27}) follows immediately.
\paragraph{Proof of Lemma~\ref{sec:case-when-hx-2}}
If $\beta\ge 2$, then
  \begin{displaymath}
    \begin{array}[c]{rcl}
\ds      g(x,\tilde q)-g(x, q) -g_q(x, q)\cdot (\tilde q -q )&\ge& \ds G(\tilde p)-G( p) -G_p(p)\cdot (\tilde p -p )
\\
&=& \ds \int_{0}^1 (1-s) G_{pp}( s \tilde p +(1-s) p) (\tilde p -p)\cdot (\tilde p -p) ds \\
&\ge& \ds  \beta |p-\tilde p|^2 \int_{0}^1 (1-s) |s \tilde p +(1-s) p|^{\beta -2} ds ,
    \end{array}
  \end{displaymath}
where the first (resp. second) inequality comes from Lemma \ref{sec:case-when-hx-1} (resp. Lemma \ref{sec:case-when-hx}).
Hence,
  \begin{displaymath}
    \begin{array}[c]{rcl}
\ds      g(x,\tilde q)-g(x, q) -g_q(x, q)\cdot (\tilde q -q )&\ge& \ds  \beta  |p|^{\beta -2} |p-\tilde p|^2 \int_{0}^1 (1-s)^{\beta-1}  ds
\\ &=& \ds  |p|^{\beta -2} |p-\tilde p|^2.
\end{array}
\end{displaymath}
On the other hand,
 \begin{displaymath}
    \begin{array}[c]{rcl}
\ds      g(x,\tilde q)-g(x, q) -g_q(x, q)\cdot (\tilde q -q )&\ge& \ds  \beta  |\tilde p|^{\beta -2} |p-\tilde p|^2 \int_{0}^1 (1-s) s^{\beta-2}  ds
\\ &=& \ds  \frac 1 {\beta-1}  |\tilde p|^{\beta -2} |p-\tilde p|^2.
\end{array}
\end{displaymath}
The last two estimates yield (\ref{eq:29}) because  $1\ge \frac 1 {\beta-1}$, then (\ref{eq:30}).\\ \\
If $1<\beta<2$ and $p+\tilde p\not =0$, then
 \begin{displaymath}
    \begin{array}[c]{rcl}
\ds      g(x,\tilde q)-g(x, q) -g_q(x, q)\cdot (\tilde q -q )&\ge&  \ds \int_{0}^1 (1-s) G_{pp}( s \tilde p +(1-s) p) (\tilde p -p)\cdot (\tilde p -p) ds \\
&\ge& \ds  \beta(\beta-1) |p-\tilde p|^2 \int_{0}^1 (1-s) |s \tilde p +(1-s) p|^{\beta -2} ds ,
    \end{array}
  \end{displaymath}
where the first (resp. second) inequality comes from Lemma \ref{sec:case-when-hx-1} (resp. Lemma \ref{sec:case-when-hx}). But
$|s \tilde p +(1-s) p|\le 2 \max(|p|_\infty, |\tilde p|_\infty)$ and
since $\beta<2$,  $|s \tilde p +(1-s) p|^{\beta -2} \ge 2^{\beta-2}  \min( |p|_\infty ^{\beta-2},|\tilde p|_\infty^{\beta-2})$. Hence,
\begin{displaymath}
    g(x,\tilde q)-g(x, q) -g_q(x, q)\cdot (\tilde q -q ) \ge   2^{\beta-2} \beta(\beta-1)\min( |p|_\infty ^{\beta-2},|\tilde p|_\infty^{\beta-2}) \int_0^1 (1-s)ds,
\end{displaymath}
which yields (\ref{eq:31}).
\paragraph{Proof of Lemma~\ref{sec:case-when-hx-3}}
  We have that
  \begin{displaymath}
    \begin{split}
&    \left(g_q(x, \tilde q) -g_q(x, q)\right)\cdot r\\ = &  \ds \beta    \left( |\tilde p|^{\beta-2}  \left( -\tilde p_1 r_1 + \tilde p_2 r_2- \tilde p_3 r_3+\tilde p_4 r_4 \right)-    |p|^{\beta-2}  \left( -p_1 r_1 + p_2 r_2- p_3 r_3+p_4 r_4 \right)\right).
      \end{split}
      \end{displaymath}
Call $\ell$ the function defined on $(\R_+)^4$ by
\begin{displaymath}
\ell(p)= \beta |p|^{\beta-2}  p\cdot {\rm Diag}(-1,1,-1,1) r  ,
\end{displaymath}
where $ {\rm Diag}(-1,1,-1,1)$ stands for the diagonal matrix in $\R^{4\times 4}$ whose diagonal is $(-1,1,-1,1)$.
We have
 \begin{displaymath}
   \begin{array}[c]{ll}
  \ds  \left(g_q(x, \tilde q) -g_q(x, q)\right)\cdot r = \ell (\tilde p) -\ell (p) = \int_0^1 \ell_p (s\tilde p +(1-s) p)\cdot (\tilde p -p) ds, \quad &\hbox{if } p+\tilde p \not=0, \\
\ds \left(g_q(x, \tilde q) -g_q(x, q)\right)\cdot r =0, \quad &\hbox{if } p+\tilde p =0.
   \end{array}
  \end{displaymath}
But
\begin{displaymath}
  \ell_p(p)= \beta (\beta-2) |p|^{\beta -4}    \left(p\cdot {\rm Diag}(-1,1,-1,1) r\right)  p + \beta  |p|^{\beta-2}   {\rm Diag}(-1,1,-1,1) r, \quad \forall p\not=0.
\end{displaymath}
Hence, if $ p+\tilde p \not=0$,
\begin{equation}\label{eq:33}
  \begin{array}[c]{ll}
& \ds    \left(g_q(x, \tilde q) -g_q(x, q)\right)\cdot r  \\
=&\ds  \beta (\beta-2) \int_0^1 |s\tilde p +(1-s) p|^{\beta-4}  \Bigl(  (s\tilde p +(1-s) p) \cdot {\rm Diag}(-1,1,-1,1) r\Bigr)      \Bigl( (s\tilde p +(1-s) p) \cdot (\tilde p -p) \Bigr) ds\\
 &\ds +  \beta  \int_0^1 |s\tilde p +(1-s) p|^{\beta-2}    \Bigl( (\tilde p -p)\cdot {\rm Diag}(-1,1,-1,1) r\Bigr) ds.
  \end{array}
\end{equation}
Call $I$ (respectively $II$) the first (respectively second) integral in (\ref{eq:33}).
It is clear that
\begin{displaymath}
  |I|\le  |p-\tilde p|  |r| \int_0^1 |s\tilde p +(1-s) p|^{\beta-2} ds \le  \max(|p|^{\beta-2},|\tilde p|^{\beta-2})  |p-\tilde p|  |r|.
\end{displaymath}
We also have
\begin{displaymath}
  |II|\le  \max(|p|^{\beta-2},|\tilde p|^{\beta-2})  |p-\tilde p|  |r|,
\end{displaymath}
 and (\ref{eq:32}) follows from the last two estimates.
\begin{acknowledgement}
The first author  would like to thank O. Gu{\'e}ant for helpful discussions.
\end{acknowledgement}

\bibliographystyle{plain}
\bibliography{MFG}

\begin{thebibliography}{10}

\bibitem{ACCDplanning}
Y.~Achdou, F.~Camilli, and I.~Capuzzo~Dolcetta.
\newblock Mean field games: numerical methods for the planning problem.
\newblock {\em SIAM J. Control Optim.}, 50(1):77--109, 2012.

\bibitem{MR2679575}
Y.~Achdou and I.~Capuzzo-Dolcetta.
\newblock Mean field games: numerical methods.
\newblock {\em SIAM J. Numer. Anal.}, 48(3):1136--1162, 2010.

\bibitem{achdou_perez_2012}
Y.~Achdou and V.~Perez.
\newblock Iterative strategies for solving linearized discrete mean field
  games.
\newblock {\em Networks and heterogeneous media}, 7(2):197--217, 2012.

\bibitem{MR697382}
H.~Brezis.
\newblock {\em Analyse fonctionnelle}.
\newblock Collection Math\'ematiques Appliqu\'ees pour la Ma\^\i trise.
  [Collection of Applied Mathematics for the Master's Degree]. Masson, Paris,
  1983.
\newblock Th{\'e}orie et applications. [Theory and applications].

\bibitem{cam_silva}
F.~Camilli and F.J. Silva.
\newblock A semi-discrete approximation for a first order mean field games
  problem.
\newblock {\em Networks and heterogeneous media}, 2012.

\bibitem{porretta2012}
P.~Cardaliaguet, , J-M. Lasry, P-L. Lions, and A.~Porretta.
\newblock Long time average of mean field games.
\newblock {\em Networks and heterogeneous media}, 7(2):279--301, 2012.

\bibitem{MR1804748}
R.~Eymard, T.~Gallou{\"e}t, and R.~Herbin.
\newblock Finite volume methods.
\newblock In {\em Handbook of numerical analysis, {V}ol. {VII}}, Handb. Numer.
  Anal., VII, pages 713--1020. North-Holland, Amsterdam, 2000.

\bibitem{MR1652593}
R.~Eymard, T.~Gallou{\"e}t, D.~Hilhorst, and Y.~Na{\"{\i}}t~Slimane.
\newblock Finite volumes and nonlinear diffusion equations.
\newblock {\em RAIRO Mod\'el. Math. Anal. Num\'er.}, 32(6):747--761, 1998.

\bibitem{MR2601334}
D.~A. Gomes, J.~Mohr, and R.~R. Souza.
\newblock Discrete time, finite state space mean field games.
\newblock {\em J. Math. Pures Appl. (9)}, 93(3):308--328, 2010.

\bibitem{gueant2011}
O.~Gu{\'e}ant.
\newblock Mean field games equations with quadratic hamiltonian: a specific
  approach.
\newblock arXiv:1106.3269v1, 2011.

\bibitem{gueant2012}
O.~Gu{\'e}ant.
\newblock New numerical methods for mean field games with quadratic costs.
\newblock {\em Networks and heterogeneous media}, 7(2):315--336, 2012.

\bibitem{MR2762362}
O.~Gu{\'e}ant, J-M. Lasry, and P-L. Lions.
\newblock Mean field games and applications.
\newblock In {\em Paris-{P}rinceton {L}ectures on {M}athematical {F}inance
  2010}, volume 2003 of {\em Lecture Notes in Math.}, pages 205--266. Springer,
  Berlin, 2011.

\bibitem{MR2647032}
A.~Lachapelle, J.~Salomon, and G.~Turinici.
\newblock Computation of mean field equilibria in economics.
\newblock {\em Math. Models Methods Appl. Sci.}, 20(4):567--588, 2010.

\bibitem{MR2269875}
J-M. Lasry and P-L. Lions.
\newblock Jeux \`a champ moyen. {I}. {L}e cas stationnaire.
\newblock {\em C. R. Math. Acad. Sci. Paris}, 343(9):619--625, 2006.

\bibitem{MR2271747}
J-M. Lasry and P-L. Lions.
\newblock Jeux \`a champ moyen. {II}. {H}orizon fini et contr\^ole optimal.
\newblock {\em C. R. Math. Acad. Sci. Paris}, 343(10):679--684, 2006.

\bibitem{MR2295621}
J-M. Lasry and P-L. Lions.
\newblock Mean field games.
\newblock {\em Jpn. J. Math.}, 2(1):229--260, 2007.

\bibitem{MR833413}
P.-L. Lions.
\newblock Quelques remarques sur les probl\`emes elliptiques quasilin\'eaires
  du second ordre.
\newblock {\em J. Analyse Math.}, 45:234--254, 1985.

\bibitem{PLL}
P-L. Lions.
\newblock Cours du {C}oll{\`e}ge de {F}rance.
\newblock http://www.college-de-france.fr/default/EN/all/equ$_-$der/,
  2007-2011.

\bibitem{0925.00005}
W~Rudin.
\newblock {\em {Real and complex analysis. 3rd ed.}}
\newblock {New York, NY: McGraw-Hill. xiv, 416 p. \$ 44.95 }, 1987.

\end{thebibliography}

\end{document}